\theoremstyle{plain}
\newtheorem{theorem}{Theorem}[section]
\newtheorem{cor}[theorem]{Corollary}
\newtheorem{def-thm}[theorem]{Definition-Theorem}
\newtheorem{lemma}[theorem]{Lemma}
\newtheorem{property}[theorem]{Property}
\newtheorem*{tha}{Theorem A}
\newtheorem*{thb}{Theorem B}
\theoremstyle{definition}
\begin{document}
\title[A Generalization of Zalcman's Lemma  on Complex  Lie Groups]{A Generalization of Zalcman's Lemma  on Complex  Lie Groups}
\author[X.-J. Dong and Y.-D. Lv]{Xianjing Dong* and Yanda Lv}

\address{School of Mathematical Sciences \\ Qufu Normal University \\ Qufu, Jining, Shandong, 273165, P. R. China}
\email{xjdong05@126.com}
\address{School of Mathematical Sciences \\ Qufu Normal University \\ Qufu, Jining, Shandong, 273165, P. R. China}
\email{Lvyanda2000@163.com}


\subjclass[2010]{32A19; 32H02; 32H25} 
\keywords{Normal families; Zalcman's lemma; Marty's criterion;  Heuristic principle; Complex Lie groups}
\thanks{$^*$Correspondence author}
\maketitle \thispagestyle{empty} \setcounter{page}{1}

\begin{abstract}   Zalcman's Lemma makes significant applications in normal families, complex dynamics  and   related   problems in complex analysis. 
In the present   paper, we are devoted to generalizing  
   the classical Zalcman's lemma to  complex  Lie groups  by means of  
  exponential mappings defined by  holomorphic one-parameter subgroups. 
 \end{abstract}

\vskip\baselineskip

\setlength\arraycolsep{2pt}
\medskip

\section{Introduction}

In 1975, Zalcman \cite{LZ1975} contributed an important   result   in characterization of normality of families of meromorphic functions on $\mathbb C.$ He  proved that  
\begin{tha}[Zalcman's lemma] Let $\mathscr F$ be a family of meromorphic functions on a domain $\Omega\subset\mathbb C.$ Then, $\mathscr F$ is not  normal on $\Omega$  if and only if there exist 

$(a)$ a  sequence $\{f_j\}_{j=1}^{+\infty}\subset \mathscr F;$ 

$(b)$   a sequence $\{p_j\}_{j=1}^{+\infty}\subset\Omega$ with $p_j\to p_0\in\Omega$ as $j\to+\infty;$

$(c)$ a  sequence $\{\rho_j\}_{j=1}^{+\infty}\subset\mathbb R$ with  $0<\rho_j\to0$ as $j\to+\infty$ 

\noindent	 such that  the sequence 
$$\phi_j(z):=f_j(p_j+\rho_jz), \ \ \ \    j=1,2,\cdots$$
converges uniformly on compact subsets of $\mathbb C$ to a nonconstant meromorphic  function $\phi$ on $\mathbb C.$ 
\end{tha}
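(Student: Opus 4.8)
The plan is to prove both implications through the \emph{spherical derivative} $f^{\#}(z)=|f'(z)|/(1+|f(z)|^2)$ and Marty's criterion, which asserts that $\mathscr F$ is normal on $\Omega$ if and only if $f^{\#}$ is locally uniformly bounded over $f\in\mathscr F$. The sufficiency direction is the short one: if the stated conclusion holds yet $\mathscr F$ were normal, Marty's criterion would give a bound $f^{\#}\le C$ on a neighborhood of $p_0$, whence $\phi_j^{\#}(z)=\rho_j f_j^{\#}(p_j+\rho_j z)\le \rho_j C\to0$ locally uniformly; passing to the limit forces $\phi^{\#}\equiv0$, so $\phi$ is constant, a contradiction. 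So the substance lies in the necessity direction.

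For necessity, after an affine change of coordinates I would assume $p_0=0$ and $\overline{\mathbb D}\subset\Omega$. Non-normality on $\mathbb D$, via Marty's criterion, yields a sequence $f_j\in\mathscr F$ with $\sup_{|z|\le1/2}f_j^{\#}\to+\infty$. The heart of the argument is Zalcman's rescaling device: for each $j$ set
$$ M_j:=\max_{|z|\le1}(1-|z|^2)f_j^{\#}(z)=(1-|z_j|^2)f_j^{\#}(z_j), $$
the maximum being attained at an interior point $z_j$ because the (continuous) weighted function vanishes on $\partial\mathbb D$; since $1-|z|^2\ge 3/4$ on $\{|z|\le1/2\}$, the blow-up forces $M_j\to+\infty$. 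I would then put $\rho_j:=1/f_j^{\#}(z_j)=(1-|z_j|^2)/M_j$, so that $\rho_j\to0$, pass to a subsequence with $z_j\to p_0'\in\overline{\mathbb D}$, and define the candidate sequence $\phi_j(z):=f_j(z_j+\rho_j z)$ with $p_j:=z_j$.

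Two computations then close the argument. First, $\phi_j^{\#}(0)=\rho_j f_j^{\#}(z_j)=1$, which will force any limit to be nonconstant. Second, for $|z|\le R$ the maximality of $z_j$ gives
$$ \phi_j^{\#}(z)=\rho_j f_j^{\#}(z_j+\rho_j z)\le \rho_j\,\frac{(1-|z_j|^2)f_j^{\#}(z_j)}{1-|z_j+\rho_j z|^2}=\frac{1-|z_j|^2}{1-|z_j+\rho_j z|^2}, $$
and since $\rho_j\to0$ this last ratio tends to $1$ uniformly on $|z|\le R$, hence is $\le2$ for large $j$. By Marty's criterion $\{\phi_j\}$ is normal on $\mathbb C$, so a subsequence converges spherically-uniformly on compacta to a limit $\phi$; because spherical derivatives pass to the limit, $\phi^{\#}(0)=1\neq0$, which both rules out $\phi\equiv\infty$ and shows $\phi$ is a nonconstant meromorphic function on $\mathbb C$.

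The main obstacle is engineering the rescaling data $(z_j,\rho_j)$ so that the rescaled family is simultaneously \emph{nondegenerate} (spherical derivative $=1$ at the origin) and \emph{precompact} (spherical derivatives uniformly bounded on compacta); the weight $(1-|z|^2)$ and the resulting self-improving inequality in the second display are exactly what reconcile these two requirements, and verifying that this bound survives the limit $j\to\infty$ is the delicate point. A secondary care point is the spherical-metric convergence theory, namely ensuring that the normal-family limit is genuinely meromorphic rather than the constant $\infty$, which is handled by the normalization $\phi^{\#}(0)=1$.
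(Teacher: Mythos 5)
Your proof is correct, and it is the classical Zalcman argument; but note that the paper never proves Theorem A directly --- it recovers it as the special case $G=\mathbb C$, $N=\mathbb P^1(\mathbb C)$ of Theorem 1.1, so the relevant comparison is with that proof. The two arguments share the skeleton (Marty's criterion in both directions, plus a rescaling at points of near-maximal derivative), but the renormalization device differs. You maximize the \emph{weighted} quantity $(1-|z|^2)f_j^{\#}(z)$ on a fixed closed disk; the weight vanishes on the boundary, and the resulting self-improving inequality $\phi_j^{\#}(z)\le(1-|z_j|^2)/(1-|z_j+\rho_j z|^2)$ gives the locally uniform bound (indeed $\phi^{\#}\le 1=\phi^{\#}(0)$ in the limit, which is extra information). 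The paper instead maximizes the unweighted derivative norm over the \emph{shrinking} balls $\overline{B(p_0,1/j)}$ and engineers the blow-up rate $j^{s_j}$ so that $\rho_j=1/M_j$ is small compared with the radius on which the rescaled map stays inside the ball. Your route is cleaner in $\mathbb C$ but only locates the base point as some $p_0'\in\overline{\mathbb D}$ (enough for Theorem A as stated, not for the local refinement in Corollary 1.2), and it depends on having a global defining function $1-|z|^2$ for the disk, which is exactly what is unavailable on a Lie group; the paper's shrinking-ball scheme pins $p_j\to p_0$ at the actual point of non-normality and transfers to the manifold setting. Two small points to tighten: the assertion that the ratio tends to $1$ ``since $\rho_j\to0$'' is not literally a consequence of $\rho_j\to0$ alone when $|z_j|\to1$; what you need is $\rho_j/(1-|z_j|)=(1+|z_j|)/M_j\le 2/M_j\to0$, which your construction does provide. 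And since each $\phi_j$ is only defined on a disk of radius $R_j\to+\infty$, extracting one subsequence converging on all of $\mathbb C$ requires the usual exhaustion-and-diagonal step, which you should at least mention.
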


Zalcman's work was inspired by the result of Lohwater and Pommerenke \cite{L-P}, which establishes a well-known heuristic principle  showing that Bloch's  principle \cite{Blo} is valid to a certain  Picard type property.
Due to the importance of Zalcman's lemma, many authors have been working on   its  generalizations. 
A beautiful  work on this lemma  is due to Pang \cite{Pang, Pang1} who proved  a  stronger  version with a  real parameter, 
 which  thus shows that Bloch's  principle holds for some Picard type properties  related to derivatives. 
More details for    this  lemma and its applications  in one complex variable  
  refer to  \cite{CG, G-P-F, Sch, W-C, LZ1} and  references therein.

Many  efforts  were  made  to 
  generalize  Zalcman's  lemma to several complex variables, and more generally to complex manifolds. An earlier  study about this lemma  was carried by Aladro and Krantz \cite{A-K}. Unfortunately, their result (Theorem 3.1, \cite{A-K}) was later pointed out  to be incorrect by Thai, Trang and Huong \cite{T-T-H} who presented  a correct extension of   Zalcman's  lemma as follows.

\begin{thb} Let $N$ be a compact Hermitian manifold, and $\Omega$ a domain in $\mathbb C^m.$
Let $\mathscr F: \Omega\to N$ be a family of holomorphic mappings.  Then, $\mathscr F$ is not  normal on $\Omega$  if and only if there exist 

$(a)$ a  sequence $\{f_j\}_{j=1}^{+\infty}\subset \mathscr F;$ 

$(b)$   a sequence $\{p_j\}_{j=1}^{+\infty}\subset\Omega$ with $p_j\to p_0\in\Omega$ as $j\to+\infty;$

$(c)$ a  sequence $\{\rho_j\}_{j=1}^{+\infty}\subset\mathbb R$ with  $0<\rho_j\to0$ as $j\to+\infty$ 

\noindent	 such that  the sequence 
$$\phi_j(z):=f_j(p_j+\rho_jz), \ \ \ \    j=1,2,\cdots$$
converges uniformly on compact subsets of $\mathbb C^m$ to a nonconstant holomorphic mapping  $\phi: \mathbb C^m\to N.$
\end{thb}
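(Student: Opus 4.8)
The plan is to transplant to the Hermitian setting the two ingredients behind the classical statement (Theorem A): a Marty-type characterization of normality through the differentials of the family, and Zalcman's rescaling construction. For a holomorphic $f:\Omega\to N$ I would measure the differential by its operator norm
$$\|df_w\|:=\sup_{0\ne v\in\mathbb C^m}\frac{\|df_w(v)\|_h}{\|v\|},$$
where $\|\cdot\|_h$ is induced by the Hermitian metric on $N$ and $\|v\|$ is the Euclidean norm on $\mathbb C^m$; this quantity plays the role of the spherical derivative. The first task is the Marty-type statement: since $N$ is compact, the compact-open topology on maps $\Omega\to N$ is governed by equicontinuity via Arzel\`a--Ascoli, and equicontinuity on compacta is equivalent to local uniform boundedness of $\{\|df_w\|:f\in\mathscr F\}$. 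Thus $\mathscr F$ is normal on $\Omega$ if and only if for every compact $K\subset\Omega$ there is $C_K$ with $\|df_w\|\le C_K$ for all $f\in\mathscr F$ and $w\in K$. Note that one cannot simply quote Theorem A here, since the target is a general compact manifold rather than $\widehat{\mathbb C}$ and the domain is multidimensional.

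For the ``if'' direction, suppose the sequences in $(a)$--$(c)$ exist with $\phi_j(z)=f_j(p_j+\rho_j z)\to\phi$ nonconstant, and assume for contradiction that $\mathscr F$ is normal. Fixing a compact neighbourhood $K$ of $p_0$, the criterion gives $\|df_w\|\le C_K$ on $K$, so the chain rule yields $\|d(\phi_j)_z\|=\rho_j\,\|df_{p_j+\rho_j z}\|\le C_K\rho_j\to0$ uniformly on compacta. Since uniform convergence of holomorphic maps forces convergence of differentials (Cauchy estimates in charts), $d\phi\equiv0$ and $\phi$ is constant, a contradiction; hence $\mathscr F$ is not normal.

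For the ``only if'' direction I would run Zalcman's rescaling. Non-normality of $\mathscr F$ is non-normality at some $q_0\in\Omega$; fix $R>0$ with $\overline{B(q_0,R)}\subset\Omega$. By the criterion there are $f_j\in\mathscr F$ and points tending to $q_0$ at which $\|df_j\|\to\infty$. Set
$$M_j:=\max_{\|w-q_0\|\le R}\Bigl(1-\tfrac{\|w-q_0\|^2}{R^2}\Bigr)\|df_{j,w}\|,$$
which is attained at an interior point $p_j$ (the weight annihilates the boundary) and satisfies $M_j\to\infty$. Put $L_j:=1-\|p_j-q_0\|^2/R^2\in(0,1]$ and $\rho_j:=1/\|df_{j,p_j}\|=L_j/M_j\to0$, and pass to a subsequence with $p_j\to p_0\in\Omega$. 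Define $\phi_j(z):=f_j(p_j+\rho_j z)$, eventually defined on any fixed ball since $\rho_j\to0$. Then $\|d(\phi_j)_0\|=\rho_j\|df_{j,p_j}\|=1$, while maximality gives, for $w=p_j+\rho_j z$ with $\|z\|\le C$,
$$\|d(\phi_j)_z\|=\rho_j\|df_{j,w}\|\le\frac{L_j}{1-\|w-q_0\|^2/R^2}=\frac{1}{1+O(1/M_j)}\longrightarrow1,$$
uniformly in $\|z\|\le C$. Hence $\{\phi_j\}$ has locally uniformly bounded differentials on $\mathbb C^m$, so by the criterion it is normal; a further subsequence converges uniformly on compacta to a holomorphic $\phi:\mathbb C^m\to N$, and $\|d\phi_0\|=1$ shows $\phi$ is nonconstant, which finishes the argument.

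The main obstacle is the Marty-type criterion in the stated generality: for a compact \emph{Hermitian} (not necessarily K\"ahler) target one must verify that ``normal $\Leftrightarrow$ locally bounded differentials'' genuinely holds, resting on the bounded geometry of $N$ so that bounded differentials produce an equicontinuous, hence relatively compact, family, and conversely. A secondary technical point is the uniformity of the estimate $\|d(\phi_j)_z\|\le1+o(1)$: one must check the error is $O(1/M_j)$ uniformly on $\|z\|\le C$, which uses $\rho_j/L_j=1/M_j\to0$ together with $\|p_j-q_0\|\le R$ to keep $w$ inside $B(q_0,R)$ even when $L_j$ is small. These are precisely the places where the isotropic scalar rescaling $p_j+\rho_j z$, with $z\in\mathbb C^m$, must be handled with care.
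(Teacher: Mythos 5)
Your proposal is correct, and it rests on the same foundational step as the paper (the Marty-type criterion: normality is equivalent to local uniform boundedness of the operator norms $\|df\|$, which is the paper's Lemma \ref{normal}, proved via Wu's equicontinuity result for compact targets and a Weierstrass-type argument for the converse). Where you genuinely diverge is in the rescaling step of the ``only if'' direction. The paper (in the proof of Theorem \ref{main}, which specializes to this statement when $G=\mathbb C^m$) maximizes the unweighted derivative over a sequence of \emph{shrinking} balls $\overline{B(p_0,1/j)}$ and must then arrange, via an auxiliary growth sequence $j^{s_j}$ and radii $r_j$, that $\rho_j=1/M_j$ decays fast enough for $\phi_j$ to be defined on balls exhausting $\mathbb C^m$. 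You instead use the classical Zalcman weight $\bigl(1-\|w-q_0\|^2/R^2\bigr)\|df_{j,w}\|$ over a \emph{fixed} ball; the identity $\rho_j/L_j=1/M_j\to0$ then handles the domain-of-definition issue automatically and yields the sharp normalization $\|d(\phi_j)_z\|\le 1+o(1)$ with $\|d(\phi_j)_0\|=1$, avoiding the paper's bookkeeping entirely. The trade-offs: your route is cleaner and more self-normalizing in $\mathbb C^m$, but it exploits the global linear structure (the weight is a function of the Euclidean distance to $q_0$ on a fixed coordinate ball), which is precisely what the paper's shrinking-ball construction is designed to avoid so that it transfers to exponential charts on a Lie group; also, your $p_j$ converge only to some $p_0\in\overline{B(q_0,R)}$ rather than to the point of non-normality itself, which is immaterial for the statement as given but matters for the localized version (the paper's Corollary \ref{mmpp}). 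The two technical points you flag --- verifying the Marty-type equivalence for a general compact Hermitian target, and the uniformity of the $O(1/M_j)$ error --- are real but are handled exactly as you sketch, so there is no gap.
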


For $N=\mathbb P^1(\mathbb C),$  Theorem B  was  generalized  by  Chang, Xu and Yang \cite{C-X-Y}, Dovbush \cite{PD},   Yang and Pang \cite{Y-P},   etc.  More details refer to  \cite{D-K} and  references therein. 

Unfortunately,  unlike   Euclidean spaces,  a  topological   manifold is  generally not a  linear space, 
which  lacks    addition and scalar multiplication operations. 
 So,   
  developing   Zalcman's lemma on a general complex manifold seems to be  a non-trivial task. 
   To our knowledge,  there   is no literature  about   Zalcman's lemma that is established  on a  non-Euclidean complex manifold.
    To fill this gap,  we  are devoted  to      
 a generalization  of    this  lemma from $\mathbb C^m$  to a
    complex Lie group of finite dimension.  There are many classical examples of complex Lie groups such as  $\mathbb C^m, \mathbb C^m\setminus\Lambda, GL(m, \mathbb C), SL(m,\mathbb C), O(m,\mathbb C),$ etc., see \cite{Lee}.

Let $G$ be a finite-dimensional complex Lie group whose  identity element  is denoted by $e.$  Note that each  $g\in G$ can define   a left translation  
 $L_g: G\to G$   by 
$L_g(h)=gh$ for $h\in G.$
Let  $T^{1,0}_gG$ be the holomorphic tangent space of $G$ at $g\in G.$ The \emph{holomorphic Lie algebra} of $G$ is defined by 
$\mathfrak g=T^{1,0}_eG.$
 Let $$\exp_g: \  T^{1,0}_gG\to G, \ \ \ \     g\in G$$ be the exponential mapping which is a holomorphic mapping defined by  the holomorphic one-parameter subgroup of $G,$ see  Section \ref{sec2} for details.  

We establish  a version of Zalcman's lemma on complex Lie groups. 
\begin{theorem}\label{main} Let $G$ be an  $m$-dimensional complex Lie group with holomorphic Lie algebra 
$\mathfrak g,$ 
and $\Omega$ a domain in $G.$ Fix  a  basis  
   $\varepsilon=(\varepsilon_1,\cdots, \varepsilon_m)$ of $\mathfrak g.$
Let $N$ be a compact Hermitian manifold. Let $\mathscr F: \Omega\to N$ be a family of holomorphic mappings. 
Then,  $\mathscr F$ is not normal on $\Omega$  if and only if  there exist 

$(a)$ a  sequence $\{f_j\}_{j=1}^{+\infty}\subset \mathscr F;$ 

$(b)$   a sequence $\{p_j\}_{j=1}^{+\infty}\subset\Omega$ with $p_j\to p_0\in\Omega$ as $j\to+\infty;$

$(c)$ a  sequence $\{\rho_j\}_{j=1}^{+\infty}\subset\mathbb R$ with  $0<\rho_j\to0$ as $j\to+\infty$ 

\noindent such that  the sequence 
$$\phi_j(z):=f_j\circ\exp_{p_j}\big(\rho_j(dL_{p_j})_e(\varepsilon z^T)\big), \ \ \ \    j=1,2,\cdots$$
converges uniformly on compact subsets of $\mathbb C^m$ to a nonconstant holomorphic mapping   $\phi: \mathbb C^m\to N.$  
Here, $z^T$ is the  transpose of $z=(z_1,\cdots,z_m).$
\end{theorem}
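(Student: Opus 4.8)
The plan is to reduce Theorem~\ref{main} to the already-established Theorem~B by means of a local holomorphic chart built from the exponential mapping. The key observation is that the expression $\exp_{p_j}\bigl(\rho_j(dL_{p_j})_e(\varepsilon z^T)\bigr)$ is, for each fixed $j$, the composition of the linear map $z\mapsto \varepsilon z^T$ from $\mathbb C^m$ into $\mathfrak g$, the differential $(dL_{p_j})_e:\mathfrak g\to T^{1,0}_{p_j}G$, and the exponential $\exp_{p_j}:T^{1,0}_{p_j}G\to G$. Since $\exp_e$ is a biholomorphism from a neighborhood of $0\in\mathfrak g$ onto a neighborhood $U$ of $e$ in $G$, the map $\Psi_g(z):=\exp_g\bigl((dL_g)_e(\varepsilon z^T)\bigr)=L_g\bigl(\exp_e(\varepsilon z^T)\bigr)$ is a biholomorphism from a fixed neighborhood $V$ of $0\in\mathbb C^m$ onto the neighborhood $gU$ of $g$, where $V$ does not depend on $g$ because left translations are biholomorphisms of $G$. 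This gives a family of holomorphic coordinate charts, uniform in the base point, which is exactly what is needed to import the Euclidean scaling argument.

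\medskip

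\noindent\emph{From normality on $G$ to Euclidean normality.} First I would fix the chart at the limit point $p_0$. Choosing a small ball $B\subset\mathbb C^m$ about $0$ so that $\Psi_{p_0}:B\to\Psi_{p_0}(B)\subset\Omega$ is a biholomorphism onto a neighborhood of $p_0$, one pulls back the family $\mathscr F$ to a family $\widetilde{\mathscr F}=\{f\circ\Psi_{p_0}:B\to N\}$ of holomorphic mappings on the Euclidean domain $B\subset\mathbb C^m$. Because $\Psi_{p_0}$ is a biholomorphism and normality is a local, biholomorphically invariant property (convergence being measured in the Hermitian metric on the compact target $N$), $\mathscr F$ fails to be normal near $p_0$ if and only if $\widetilde{\mathscr F}$ fails to be normal near $0$. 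I would phrase this carefully: non-normality of $\mathscr F$ on $\Omega$ produces a point of non-normality $p_0\in\Omega$, and after translating the chart one may assume the obstruction sits inside $\Psi_{p_0}(B)$.

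\medskip

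\noindent\emph{Applying Theorem~B and transporting the conclusion.} Apply Theorem~B to $\widetilde{\mathscr F}$ on $B$: it yields sequences $\widetilde f_j=f_j\circ\Psi_{p_0}\in\widetilde{\mathscr F}$, points $\zeta_j\to\zeta_0\in B$, and scalars $0<\widetilde\rho_j\to 0$ such that $\widetilde f_j(\zeta_j+\widetilde\rho_j z)$ converges locally uniformly on $\mathbb C^m$ to a nonconstant holomorphic $\phi:\mathbb C^m\to N$. Setting $p_j:=\Psi_{p_0}(\zeta_j)\to \Psi_{p_0}(\zeta_0)=:p_0'\in\Omega$ and $\rho_j:=\widetilde\rho_j$, one must rewrite the Euclidean rescaling $\Psi_{p_0}(\zeta_j+\widetilde\rho_j z)$ in terms of the exponential centered at the moving point $p_j$. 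The main computation is to show that
\[
\Psi_{p_0}(\zeta_j+\rho_j z)=\exp_{p_j}\!\bigl(\rho_j(dL_{p_j})_e(\varepsilon z^T)\bigr)\bigl(1+o(1)\bigr)
\]
in the appropriate sense, i.e. that replacing the fixed chart $\Psi_{p_0}$ by the moving chart $\Psi_{p_j}$ introduces only a correction that tends to the identity uniformly on compact $z$-sets as $j\to\infty$. This follows by a first-order Taylor expansion: since $\Psi_{p_0}$ and $\Psi_{p_j}$ are charts whose base points converge ($p_j\to p_0'$) and whose linear parts at $0$ agree up to the smooth dependence of $(dL_g)_e$ on $g$, the discrepancy between the two rescalings is $O(\rho_j)$ after the scaling, hence negligible in the limit. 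The limit mapping is therefore unchanged, and $\phi_j(z)=f_j\circ\Psi_{p_j}(\rho_j z)$ converges to the same nonconstant $\phi$.

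\medskip

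\noindent The reverse implication is the easier direction: if such sequences exist with nonconstant limit $\phi$, then $\{\phi_j\}$ is a rescaled subfamily that cannot be normal in a neighborhood of $p_0$ (a normal family would force every locally uniform limit along $\rho_j\to 0$ to be constant, by the standard equicontinuity/Marty-type argument on the compact target $N$), so $\mathscr F$ is not normal. I expect the chief obstacle to be the moving-basepoint estimate in the displayed equation above: one must verify that passing from the fixed chart $\Psi_{p_0}$ to the variable chart $\Psi_{p_j}$ is harmless in the scaling limit, which requires controlling the derivative of $g\mapsto \Psi_g$ in $g$ uniformly near $p_0'$ and confirming that the resulting perturbation, after multiplication by $\rho_j\to 0$ and evaluation on a compact $z$-set, converges to zero. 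The smoothness of the group operations and of the exponential map in the base point—together with the compactness of $N$, which makes locally uniform convergence well-behaved—should make this estimate routine but is where the real content lies.
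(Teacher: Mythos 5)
Your overall strategy --- reducing to Theorem B by means of the charts $\Psi_g(z)=\exp_g\bigl((dL_g)_e(\varepsilon z^T)\bigr)=L_g\bigl(\exp(\varepsilon z^T)\bigr)$ --- is genuinely different from the paper's proof, which never invokes Theorem B: the paper redoes the Zalcman rescaling intrinsically on $G$, using an extension of Marty's criterion (Lemma \ref{normal}), a maximization of $\max\|df_j\|$ over shrinking geodesic balls with a carefully chosen exponent $s_j$, and the explicit bound $\|(d\exp_g)_\xi\|\le (e^{C_{\mathfrak g}\|\xi\|}-1)/(C_{\mathfrak g}\|\xi\|)$ from the Duhamel formula to control $\|(d\phi_j)_z\|$ on compact sets. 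Your route would be shorter if it worked, but the step you yourself flag as the crux contains a genuine gap.

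The gap is the moving-basepoint comparison. You argue that the discrepancy between $z\mapsto\Psi_{p_0}(\zeta_j+\rho_j z)$ and $z\mapsto\Psi_{p_j}(\rho_j z)$ is ``$O(\rho_j)$ after the scaling, hence negligible.'' An $O(\rho_j)$ discrepancy in $G$-distance is precisely \emph{not} negligible here: $\rho_j\sim 1/\max\|df_j\|$ is exactly the scale on which $f_j$ oscillates by $O(1)$ (that is why the rescaled limit is nonconstant), so composing with $f_j$ turns an $O(\rho_j)$ positional error into an $O(1)$ error in the value. What one must actually show is that the discrepancy, re-expressed in the \emph{rescaled} coordinate $w=(\,\cdot\,-\zeta_j)/\rho_j$, converges locally uniformly. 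Writing $\Psi_{p_j}(\rho_j z)=p_0\exp(X_j)\exp(\rho_j\varepsilon z^T)$ with $X_j=\varepsilon\zeta_j^T$ and comparing with $p_0\exp(X_j+\rho_j\varepsilon w^T)$ via Baker--Campbell--Hausdorff/Duhamel gives $w=D(X_j)z+O(\rho_j)$, where $D(X)$ is the inverse of $\tfrac{Id-e^{-\mathrm{ad}_X}}{\mathrm{ad}_X}$. In particular your assertion that the two charts' ``linear parts at $0$ agree up to the smooth dependence of $(dL_g)_e$ on $g$'' is false for nonabelian $G$ when $\zeta_0\neq 0$, and the limit mapping is \emph{not} unchanged: it becomes $\phi\circ D(X_0)$, i.e.\ $\phi$ precomposed with an invertible linear map. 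This still yields a nonconstant limit, so the approach is salvageable, but only after (i) carrying out the BCH/Duhamel estimate to show the error in the rescaled coordinate is $o(1)$, and (ii) either accepting the linear distortion or invoking the \emph{local} version of Theorem B at a point of non-normality so that $\zeta_j\to 0$ and $D(X_j)\to Id$. As written, the justification does not establish the required convergence of $f_j\circ\exp_{p_j}\bigl(\rho_j(dL_{p_j})_e(\varepsilon z^T)\bigr)$. The converse direction of your sketch is essentially the paper's argument and is fine, though it too quietly uses a uniform bound on $\|(d\exp_{p_j})_{\alpha_j(z)}\|$, which is what the paper's Theorem \ref{p333} supplies.
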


In further, we  obtain the following  local version of Theorem \ref{main}.

\begin{cor}\label{mmpp} Let $G$ be an  $m$-dimensional complex Lie group with holomorphic Lie algebra 
$\mathfrak g,$ 
and $\Omega$ a domain in $G.$ Fix  a  basis  
   $\varepsilon=(\varepsilon_1,\cdots, \varepsilon_m)$ of $\mathfrak g.$
Let $N$ be a compact Hermitian manifold. Let $\mathscr F: \Omega\to N$ be a family of holomorphic mappings. 
Then,  $\mathscr F$ is not normal at  $p_0\in\Omega$  if and only if  there exist 

$(a)$ a  sequence $\{f_j\}_{j=1}^{+\infty}\subset \mathscr F;$ 

$(b)$   a sequence $\{p_j\}_{j=1}^{+\infty}\subset\Omega$ with $p_j\to p_0$ as $j\to+\infty;$

$(c)$ a  sequence $\{\rho_j\}_{j=1}^{+\infty}\subset\mathbb R$ with  $0<\rho_j\to0$ as $j\to+\infty$ 

\noindent  such that  the sequence 
$$\phi_j(z):=f_j\circ\exp_{p_j}\big(\rho_j(dL_{p_j})_e(\varepsilon z^T)\big), \ \ \ \    j=1,2,\cdots$$
converges uniformly on compact subsets of $\mathbb C^m$ to a nonconstant holomorphic mapping  $\phi:\mathbb C^m\to N.$  
Here, $z^T$ is the  transpose of $z=(z_1,\cdots,z_m).$
\end{cor}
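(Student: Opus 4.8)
The plan is to derive Corollary \ref{mmpp} from Theorem \ref{main} by exploiting that normality is a local property: a family is normal on a domain if and only if it is normal at each of its points, so that $\mathscr F$ fails to be normal at $p_0$ precisely when it fails to be normal on every neighborhood of $p_0$ in $\Omega$. Since $G$ is a complex Lie group, every sufficiently small neighborhood $U$ of $p_0$ is again a domain in $G$ to which Theorem \ref{main} applies verbatim, and the renormalized maps $\phi_j$ are built from the intrinsic data $\exp_{p_j}$ and $(dL_{p_j})_e$, which do not depend on the ambient domain. These two observations reduce the corollary to a localization of the concentration point $p_0$.

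For the ($\Leftarrow$) direction, I would suppose the sequences in $(a)$--$(c)$ exist with $p_j\to p_0$ and $\phi_j\to\phi$ nonconstant, and let $U\subset\Omega$ be an arbitrary neighborhood of $p_0$. Because $\rho_j\to 0$ and $p_j\to p_0\in U$, for every compact $K\subset\mathbb C^m$ the points $\exp_{p_j}\big(\rho_j(dL_{p_j})_e(\varepsilon z^T)\big)$ with $z\in K$ lie in $U$ once $j$ is large; hence the tails of the three sequences form admissible Zalcman data for $\mathscr F|_U$ on the domain $U$. Applying the ($\Leftarrow$) half of Theorem \ref{main} to $U$ shows that $\mathscr F$ is not normal on $U$, and since $U$ was arbitrary, $\mathscr F$ is not normal at $p_0$.

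For the ($\Rightarrow$) direction, I would assume $\mathscr F$ is not normal at $p_0$, fix a holomorphic coordinate chart $\kappa$ centred at $p_0$, and set $\Omega_k=\{q:\|\kappa(q)\|<1/k\}$, a shrinking sequence of neighborhoods on each of which $\mathscr F$ fails to be normal. Applying Theorem \ref{main} on $\Omega_k$ yields, for every $k$, sequences $\{f^{(k)}_i\}_i$, $\{p^{(k)}_i\}_i$ with $p^{(k)}_i\to q_k\in\Omega_k$, and $\{\rho^{(k)}_i\}_i$, whose renormalizations $\phi^{(k)}_i$ converge to a nonconstant holomorphic $\phi^{(k)}:\mathbb C^m\to N$; in particular $q_k\to p_0$. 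I would then diagonalize, choosing $i_k$ so large that $P_k:=p^{(k)}_{i_k}$ satisfies $\|\kappa(P_k)\|<2/k$ (hence $P_k\to p_0$), that $R_k:=\rho^{(k)}_{i_k}<1/k\to 0$, and that $\Psi_k:=\phi^{(k)}_{i_k}$ approximates $\phi^{(k)}$ well on the ball of radius $k$. The data $\{f^{(k)}_{i_k}\}_k$, $\{P_k\}_k$, $\{R_k\}_k$ are then of the required form, provided $\{\Psi_k\}$ has a subsequence converging to a nonconstant limit.

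Securing this last point is the main obstacle, since the limit functions $\phi^{(k)}$ vary with $k$ and holomorphic maps $\mathbb C^m\to N$ need not form a normal family even though $N$ is compact. I would resolve it using the normalization built into the renormalization argument behind Theorem \ref{main}, where the rescaling factors are calibrated by Marty's criterion so that $(\phi^{(k)}_i)^{\#}\le 1$ on $\mathbb C^m$ with $(\phi^{(k)}_i)^{\#}(0)$ bounded below by a fixed positive constant for $i$ large. Carrying this through the diagonal selection makes $\{\Psi_k\}$ equicontinuous with a uniform derivative bound, so that the Arzel\`a--Ascoli theorem together with the compactness of $N$ extracts a subsequence (still written $\Psi_k$) converging uniformly on compacta to a holomorphic $\Psi:\mathbb C^m\to N$; the Cauchy estimates give $\Psi^{\#}(0)=\lim_k\Psi_k^{\#}(0)>0$, so $\Psi$ is nonconstant, completing the proof. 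Equivalently, one may bypass the varying limits by re-running the proof of Theorem \ref{main} directly from non-normality at $p_0$, observing that the points where Marty's criterion fails can then be chosen to converge to $p_0$, which forces $p_j\to p_0$ from the outset.
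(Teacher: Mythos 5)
Your proposal is correct, and its closing ``equivalently'' remark is in fact the paper's entire proof: the authors simply observe that the argument for Theorem \ref{main} localizes, i.e.\ one re-runs it starting from non-normality at $p_0$ (so that the points where Lemma \ref{normal} fails are chosen in shrinking balls around $p_0$, forcing $p_j\to p_0$), and notes that the sufficiency half of their proof of Theorem \ref{main} already establishes non-normality \emph{at} $p_0$ rather than merely on $\Omega$, so that direction requires no change at all. Your main route --- applying Theorem \ref{main} on a shrinking basis of neighborhoods $\Omega_k$ and diagonalizing --- is a genuinely different decomposition, and you correctly identify its one real obstacle: the statement of Theorem \ref{main} only asserts convergence to \emph{some} nonconstant limit $\phi^{(k)}$, with no uniform control across $k$, so the diagonal sequence $\Psi_k$ need not subconverge and its limit could a priori be constant. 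Your fix (importing the Marty-type calibration $\|(d\phi^{(k)}_i)_z\|\le 1+o(1)$ locally uniformly and $\|(d\phi^{(k)}_i)_{\mathbf 0}\|\ge 1$ from inside the proof of Theorem \ref{main}, then invoking Lemma \ref{normal} and passing the lower bound to the limit) does close the gap, but it means your ``corollary'' is not a black-box consequence of the theorem's statement --- it still opens up the proof, at which point the direct re-run you mention last is both shorter and what the paper intends. What your longer route buys is a cleanly stated localization principle that would apply to any Zalcman-type theorem whose conclusion is augmented by the derivative normalization; what the paper's route buys is brevity.
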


To see how Theorem B can be covered by our  result, we consider a special  case where  $G=\mathbb C^m.$
We regard  $\mathbb C^m$ as an additive group with zero element $\textbf{0},$ and  define the left translation 
$L_{p}(z)=p+z$ for  $p, z\in \mathbb C^m.$
Put 
$$\varepsilon=\Big(\frac{\partial}{\partial \zeta_1},\cdots,\frac{\partial}{\partial \zeta_m}\Big)\Big|_{\textbf 0},$$
where $\zeta_1,\cdots,\zeta_m$ stand for   complex coordinates of $\mathbb C^m.$
Since $T^{1,0}_z\mathbb C^m\cong\mathbb C^m$ for all $z\in\mathbb C^m,$  it is immediate that 
$$(dL_{p_j})_{\textbf 0}(\varepsilon z^T)=\Big(z_1\frac{\partial}{\partial \zeta_1}+\cdots+z_m\frac{\partial}{\partial \zeta_m}\Big)\Big|_{p_j}\in T^{1,0}_{p_j}\mathbb C^m.$$
By  the definition of $\exp,$ we obtain 
$\exp_{p_j}(\rho_j(dL_{p_j})_{\textbf0}(\varepsilon z^T))=p_j+\rho_j z,$
which indicates  that Theorem \ref{main} extends Theorem B.

Afterwards, we shall state    a Robinson-Zalcman type heuristic principle of holomorphic families of a  Picard type   property on a complex Lie group.  
Let  $g$ be a holomorphic mapping on a domain  $D$ in a complex manifold.   
Denote by  $P$   a property. 
     Write  $(g, D)\in P,$ if   $g$ is of   $P$ on $D;$ 
          $(g, x)\in \neg P,$   if  $g$ is not of       $P$ 
               in any neighborhood of  $x$ in $D;$ 
                                     and    
 $(g, D)\in \neg P,$  if   $(g, x)\in \neg P$ for all $x\in D.$

Let  $\Omega$ be a domain in a complex Lie group    $G,$ and $N$  a compact Hermitian manifold. 
 Let $f, f_j: \Omega\to N$ be  holomorphic mappings. 
    For $p, p_j\in G,$ put  
   $$\psi(z)=\exp_p(\theta z^T);  \ \ \ \   \  \phi_j(z)=f_j\circ\exp_{p_j}(\theta^j z^T), \ \ \  \  j=1,2,\cdots$$ 
     where $\theta, \theta^j$
are basis of $T^{1,0}_pG, T^{1,0}_{p_j}G,$ respectively.  
With these notations,  we obtain    a Robinson-Zalcman type heuristic principle as follows.

\begin{theorem}\label{ZZZ}  Let $G$ be  an  $m$-dimensional complex Lie group, and $\Delta, \Omega$  domains in $G$ 
with $\Delta\subset\Omega.$ Let $N$ be a compact Hermitian  manifold.   
 Assume that  $P$ is a property of holomorphic mappings   into $N$ such that    

$(a)$ if $(f, \Omega)\in P,$   then $(f, \Delta)\in P$ for any   $\Delta;$

$(b)$ if  $(f, \Delta)\in P,$  then   
$(f\circ\psi, \psi^{-1}(\Delta))\in P$ for any  $p, \theta;$

$(c)$  if    $\{\phi_j; (\phi_j, D_j)\in P\}_{j=1}^{+\infty}$ converges uniformly on  compact subsets of $\mathbb C^m$ to
 a  holomorphic mapping  $\phi: \mathbb C^m\to N$ for  domains $D_1,D_2,\cdots$ in $\mathbb C^m$ such that $D_1\subset D_2\subset\cdots,$  
    then  $(\phi, \mathbb C^m)\in P$ or  $(\phi, \mathbb C^m)\in \neg P;$  
    
$(d)$ if  $(\phi, \mathbb C^m)\in P$ or $(\phi, \mathbb C^m)\in\neg P,$ then $\phi$ is  constant.   

\noindent Then, the family $\mathscr F=\{f: (f,\Omega)\in P\}$ is normal on $\Omega.$  
\end{theorem}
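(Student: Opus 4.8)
The plan is to argue by contradiction, converting non-normality into a nonconstant limit via the generalized Zalcman lemma (Theorem \ref{main}) and then forcing that limit to be constant through conditions (a)--(d). Suppose $\mathscr F=\{f:(f,\Omega)\in P\}$ is not normal on $\Omega$. By Theorem \ref{main} there are sequences $\{f_j\}\subset\mathscr F$, $\{p_j\}\subset\Omega$ with $p_j\to p_0\in\Omega$, and $0<\rho_j\to 0$, such that
$$\phi_j(z)=f_j\circ\exp_{p_j}\big(\rho_j(dL_{p_j})_e(\varepsilon z^T)\big)$$
converges uniformly on compact subsets of $\mathbb C^m$ to a \emph{nonconstant} holomorphic mapping $\phi:\mathbb C^m\to N$. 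The whole task is to contradict this nonconstancy.

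First I would recognize $\phi_j$ as a composition to which (b) applies. Write $\psi_j(z)=\exp_{p_j}(\theta^j z^T)$ with $\theta^j=\rho_j(dL_{p_j})_e\varepsilon$, so that $\phi_j=f_j\circ\psi_j$. Since $L_{p_j}$ is a biholomorphism sending $e$ to $p_j$, its differential $(dL_{p_j})_e:\mathfrak g\to T^{1,0}_{p_j}G$ is a linear isomorphism; as $\varepsilon$ is a basis of $\mathfrak g$ and $\rho_j\neq 0$, the system $\theta^j$ is a basis of $T^{1,0}_{p_j}G$, which legitimizes the application of (b). Choosing a neighborhood $\Delta$ of $p_0$ with $\Delta\subset\Omega$, condition (a) gives $(f_j,\Delta)\in P$, and then (b) yields $(\phi_j,D_j)\in P$ with $D_j:=\psi_j^{-1}(\Delta)$.

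The heart of the argument is to replace the $D_j$ by an increasing family of domains exhausting $\mathbb C^m$, so that (c) can be invoked. The key point is that $\psi_j\to p_0$ uniformly on compacta: for fixed $z$ the tangent vector $\rho_j(dL_{p_j})_e(\varepsilon z^T)$ tends to the zero section because $\rho_j\to 0$, and by joint continuity of the exponential in basepoint and argument one gets $\psi_j(z)\to\exp_{p_0}(0)=p_0\in\Delta$. Hence for every compact $K\subset\mathbb C^m$ we have $K\subset D_j$ for all large $j$. Passing to a subsequence $j_k$ and relabeling, I would pick radii $R_k\nearrow\infty$ with $\overline{B}(0,R_k)\subset D_{j_k}$ and set $\tilde D_k=B(0,R_k)$; then $\tilde D_1\subset\tilde D_2\subset\cdots$ with $\bigcup_k\tilde D_k=\mathbb C^m$, and the heredity condition (a), now applied to the domains $D_{j_k}\supset\tilde D_k$ in $\mathbb C^m$, gives $(\phi_{j_k},\tilde D_k)\in P$. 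The uniform-on-compacta convergence $\phi_{j_k}\to\phi$ is unaffected by passing to a subsequence.

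Finally, (c) applied to the nested data $(\phi_{j_k},\tilde D_k)$ yields $(\phi,\mathbb C^m)\in P$ or $(\phi,\mathbb C^m)\in\neg P$, and then (d) forces $\phi$ to be constant, contradicting the nonconstancy supplied by Theorem \ref{main}. Therefore $\mathscr F$ must be normal on $\Omega$. The delicate points I anticipate are checking that $\theta^j$ is genuinely a basis (so that (b) is applicable) and, above all, producing the nested exhausting domains $\tilde D_k$: this is the main obstacle, since it rests on the locally uniform collapse $\psi_j\to p_0$ together with heredity (a) to pass from the a priori non-nested preimages $D_j$ to an increasing family compatible with the hypothesis of (c).
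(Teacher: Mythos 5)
Your proposal is correct and follows essentially the same route as the paper: invoke the generalized Zalcman lemma to produce a nonconstant limit $\phi$, use (a) and (b) to place each $\phi_j$ in $P$ on $D_j=\psi_j^{-1}(\Delta)$, note that these domains swallow every compact set as $\rho_j\to 0$, and then let (c) and (d) force $\phi$ to be constant, a contradiction. If anything, your explicit construction of the nested exhausting balls $\tilde D_k$ is more careful than the paper's remark that the $D_j$ are ``larger and larger,'' since the preimages need not literally be nested as hypothesis (c) requires.
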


  \section{Exponential Mappings of Complex Lie Groups}\label{sec2}

A complex Lie group $G$ is a complex manifold with a group structure such that its multiplication  $(g, h)\mapsto gh$ and  inverse  $g\mapsto g^{-1}$   are holomorphic for  $g, h\in G.$ 
In this section, we  shall prove  some basic   properties of exponential mappings of complex Lie groups. 
We refer the reader  to  \cite{Bu, Fa, Lee, Ross, V-S-C} for a more in-depth  understanding of  real and  complex Lie groups. 

Let $G$ be a  finite-dimensional complex Lie group with identity element $e.$ We use   $\mathfrak g$ to denote  the 
   \emph{holomorphic  Lie algebra}  of $G$  defined by  $\mathfrak g:=T^{1,0}_eG,$ where  $T^{1,0}_eG$ denotes  the holomorphic tangent space of $G$ at $e.$
 A holomorphic vector field $\tilde X$ on $G$ is called  \emph{left-invariant}, if 
 $(dL_g)_h\tilde X_h=\tilde X_{gh}$ for $g, h\in G,$
 where  $(dL_g)_h$ is    the differential or derivative of  left translation $L_g$ at $h.$ 
 Note that each vector  $X\in\mathfrak g$  generates a unique  left-invariant holomorphic vector field $\tilde X$ on $G$:     
$$\tilde X_g=(dL_g)_e(X)\in T^{1,0}_gG, \ \ \ \   g\in G$$
 with  $\tilde X_e=X.$
 Associate    a  $\mathscr C^1$-continuous  curve $\gamma: \mathbb C\to G$ satisfying    
$$\gamma'=\tilde X_{\gamma}, \ \ \ \     \gamma(0)=e.$$
  It is known  that   such a curve $\gamma$ has to be   existent  uniquely  and     
     holomorphic due   to   ODE theory. 
     For convenience, we shall use   $\gamma_X$ to denote    this solution   associated to $X.$ 

The \emph{exponential mapping}  $\exp: \mathfrak g\to G$ is defined by 
$\exp(X)=\gamma_X(1)$ with  $X\in \mathfrak g,$
where $\gamma_{X}: \mathbb C\to G$ is a   holomorphic curve  satisfying    
$\gamma'_X=\tilde X_{\gamma_{X}}$ with $\gamma_X(0)=e.$
Clearly,   $\exp$ is  holomorphic  on $\mathfrak g.$

\begin{property}\label{p1}
$\{\gamma_X(z)\}_{z\in\mathbb C}$ is a holomorphic  one-parameter   subgroup. 
\end{property}
\begin{proof}
Since  $\gamma_X(0)=e\in\mathfrak g,$ it suffices   to prove    that $\gamma_X(\zeta)\gamma_X(z)=\gamma_X(\zeta+z)$ for $\zeta, z\in\mathbb C.$
Set  
$\alpha_\zeta(z)=\gamma_X(\zeta)\gamma_X(z)$ and    $\beta_{\zeta}(z)=\gamma_X(\zeta+z).$
It is equivalent   to   show   that     $\alpha_\zeta, \beta_\zeta$ solve the following  Cauchy   initial valued problem
\begin{equation}\label{eq}
\gamma'=\tilde X_{\gamma}, \ \ \ \     \gamma(0)=\gamma_X(\zeta)
\end{equation}
  due    to  the uniqueness theorem. 
 By  $\gamma_X(\zeta)\gamma_X(z)=L_{\gamma_X(\zeta)}(\gamma_X(z)),$  we obtain   
 \begin{eqnarray*}
\alpha'_\zeta(z)&=& (dL_{\gamma_X(\zeta)})_{\gamma_X(z)}\gamma'_X(z) \\
&=& (dL_{\gamma_X(\zeta)})_{\gamma_X(z)}\tilde X_{\gamma_X(z)} \\
&=&  (dL_{\gamma_X(\zeta)})_{\gamma_X(z)}(dL_{\gamma_X(z)})_{e}X \\
&=&  (dL_{\gamma_X(\zeta)\gamma_X(z)})_{e}X \\
&=& (dL_{\alpha_\zeta(z)})_{e}X \\
&=& \tilde X_{\alpha_\zeta(z)}. 
 \end{eqnarray*}
Moreover, we have  
 $\alpha_\zeta(0)=\gamma_X(\zeta)\gamma_X(0)=\gamma_X(\zeta)e=\gamma_X(\zeta).$ Hence,   $\alpha_\zeta$ solves    Eq. (\ref{eq}). 
Also, by $\beta_\zeta(0)=\gamma_X(\zeta)$  and $\beta'_\zeta(z)=\gamma'_X(\zeta+z)= \tilde X_{\gamma_X(\zeta+z)}=\tilde X_{\beta_\zeta(z)},$
   we    deduce      that  $\beta_\zeta$   solves   Eq. (\ref{eq}).  Hence, we have  $\alpha_\zeta=\beta_\zeta.$  
\end{proof}

 \begin{property}\label{p2} For any $X\in\mathfrak g,$ we have 
  $$\gamma_X(z)=\exp(zX), \ \ \ \ \      z\in\mathbb C.$$
  \end{property}
\begin{proof}
   Set  $\alpha(t)=\gamma_X(zt),  \beta(t)=\gamma_{zX}(t)$ for $t\in\mathbb C.$
 Since $\exp(zX)=\gamma_{zX}(1),$   it is   sufficient  to    prove   that  $\alpha(t)=\beta(t)$    by putting     $t=1,$ which 
 is equivalent to show  that  $\alpha, \beta$ solve the same Cauchy  initial value problem. By 
   $$\alpha'(t)=z\gamma'_{X}(zt)=z\tilde X_{\alpha(t)}=(dL_{\alpha(t)})_e(zX),$$ 
  $$\beta'(t)=\widetilde{zX}_{\beta(t)}=(dL_{\beta(t)})_e(zX),$$
  we deduce   that   $\alpha, \beta$ satisfy the same first-order  differential equation.  Since   
$\alpha(0)=\beta(0)=e,$  we conclude that  $\alpha=\beta.$ This completes the proof. 
\end{proof}

\begin{property}\label{p0} For any $X\in\mathfrak g,$ we have 
$$\frac{d}{dz}\exp(zX)=\big(dL_{\exp(zX)}\big)_e(X), \ \ \ \     z\in\mathbb C.$$
\end{property}
     \begin{proof}  By Property \ref{p2}, we have
          $\exp(zX)=\gamma_X(z).$ Then  
        $$\frac{d}{dz}\exp(zX)=\gamma'_X(z)=\tilde X_{\gamma_X(z)}=\big(dL_{\gamma_X(z)}\big)_e(X)=\big(dL_{\exp(zX)}\big)_e(X).$$
     \end{proof}

In general, a vector  $\xi\in T_g^{1,0}G$ with $g\in G$ generates     a unique  left-invariant holomorphic vector field $\tilde\xi$ on $G$:    
  $$\tilde{\xi}_h=(dL_{hg^{-1}})_g(\xi)=(dL_h)_e\circ (dL_{g^{-1}})_g(\xi)\in T^{1,0}_hG, \ \ \ \      h\in G$$
with  $\tilde\xi_g=\xi.$
Similarly, we   define the \emph{exponential mapping} $$\exp_g: \ T_g^{1,0}G\to G, \ \ \ \    g\in G$$
  via $\exp_g(\xi)=\gamma_{\xi}(1)$ for $\xi\in T^{1,0}_gG,$
   where     
$\gamma_{\xi}: \mathbb C\to G$ is a   holomorphic curve    solving   uniquely  the  differential  equation  
 $\gamma'=\tilde\xi_{\gamma}$ with  initial value   $\gamma(0)=g.$
  It is clear    that    $\exp_g$ is   holomorphic  on  $T_g^{1,0}G.$ 
  For convenience,  we  shall  use  $\gamma_\xi$ to denote   this solution   associated to $\xi.$

Applying      the similar  arguments as in the proofs of Properties  \ref{p1} and \ref{p2}, we can also   show   the following   Properties  \ref{pp4} and \ref{p11}.
\begin{property}\label{pp4}
$\{\gamma_{\xi}(z)\}_{z\in\mathbb C}$ is a holomorphic  one-parameter   subgroup. 
\end{property}
 \begin{property}\label{p11} For any $\xi\in T_g^{1,0}G$ with $g\in G,$ we have 
$$\gamma_{\xi}(z)=\exp_g(z\xi), \ \ \ \  z\in\mathbb C.$$
  \end{property}
  
    \begin{property}\label{p9} For any $\xi\in T^{1,0}_gG$ with $g\in G,$  we have 
$$\exp_g(z\xi)=g\exp(zX), \ \ \ \     X=(dL_{g^{-1}})_g(\xi).$$
\end{property}
  \begin{proof} Set
  $\eta(z)=g\exp(zX)=L_g\circ\exp(zX).$
  By virtue of  Property \ref{p11}, we have 
  $\exp_g(z\xi)=\gamma_{\xi}(z).$ 
  Since $\eta(0)=\gamma_{\xi}(0)=g,$ it suffices to prove that $\eta, \gamma_{\xi}$ satisfy  the same first-order differential equation. 
By Property \ref{p0}
\begin{eqnarray*}
\eta'(z)&=&(dL_g)_{\exp(zX)}\circ (dL_{\exp(zX)})_e(X) \\
&=&d\big(L_g\circ L_{\exp(zX)}\big)_e(X) \\
&=&\big(dL_{g\exp(zX)}\big)_e(X) \\
&=& \big(dL_{\eta(z)}\big)_e(X).
 \end{eqnarray*}
On the other hand, we have 
$$\gamma'_{\xi}(z)=\tilde\xi_{\gamma_{\xi}(z)}=(dL_{\gamma_{\xi}(z)})_e\circ (dL_{g^{-1}})_g(\xi)=(dL_{\gamma_{\xi}(z)})_e(X).$$
  Therefore,  $\eta, \gamma_{\xi}$ satisfy  the same first-order differential equation.  The proof is completed. 
  \end{proof}
 
 If  $z=1$ is taken in Property \ref{p9},  then it is immediate that  
  \begin{cor}\label{cry} For any $g\in G,$ we have 
$$\exp_g=L_g\circ\exp\circ(dL_{g^{-1}})_g.$$
\end{cor}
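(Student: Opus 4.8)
The plan is to deduce Corollary \ref{cry} as an immediate consequence of Property \ref{p9} by specializing the parameter $z$ to $1$. Property \ref{p9} already furnishes, for each $\xi \in T_g^{1,0}G$, the pointwise identity $\exp_g(z\xi) = g\exp(zX)$ with $X = (dL_{g^{-1}})_g(\xi)$; the corollary simply recasts the $z=1$ instance of this as an equality of holomorphic mappings on $T_g^{1,0}G$.

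First I would fix an arbitrary $\xi \in T_g^{1,0}G$ and put $z=1$ in Property \ref{p9}, which gives $\exp_g(\xi) = g\exp\big((dL_{g^{-1}})_g(\xi)\big)$. Here I note that $(dL_{g^{-1}})_g$ sends $T_g^{1,0}G$ into $T_e^{1,0}G = \mathfrak{g}$, since $L_{g^{-1}}(g) = e$, so that $\exp$ is applicable to the resulting vector. Rewriting $g\exp(\cdot) = L_g(\exp(\cdot))$ by the definition of the left translation, the right-hand side becomes $\big(L_g \circ \exp \circ (dL_{g^{-1}})_g\big)(\xi)$. As $\xi$ ranges over all of $T_g^{1,0}G$, the two maps $\exp_g$ and $L_g \circ \exp \circ (dL_{g^{-1}})_g$ agree everywhere, which yields the asserted identity of mappings.

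There is no real obstacle beyond bookkeeping, since the substantive fact---that the integral curve through $g$ tangent to the left-invariant extension of $\xi$ is the $L_g$-translate of the corresponding curve through $e$---was already secured in the proof of Property \ref{p9}. The only point meriting a moment's care is checking that the domains and codomains of the three composed maps match up, namely that $(dL_{g^{-1}})_g$ indeed lands in $\mathfrak{g}$ so that $\exp$ and then $L_g$ may be applied in turn; once this is confirmed, the equality of mappings follows from the pointwise equality with no further argument.
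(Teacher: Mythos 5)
Your proposal is correct and follows exactly the paper's route: the paper also obtains Corollary \ref{cry} by setting $z=1$ in Property \ref{p9} and rewriting $g\exp(\cdot)$ as $L_g(\exp(\cdot))$. The extra remark about $(dL_{g^{-1}})_g$ landing in $\mathfrak g$ is a harmless (and accurate) bookkeeping check.
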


Fix  $\xi\in T^{1,0}_{g}G$ with $g\in G.$   Let 
$$\iota_\xi: \ T^{1,0}_{g}G\to T^{1,0}_\xi (T^{1,0}_gG)$$
be the canonical linear isomorphism,  i.e.,  
$\iota_{\xi}(\eta)=\theta'_\eta(0)$ for $\eta\in T^{1,0}_gG,$
where $\theta_\eta(z)=\xi+z\eta$ is a holomorphic curve in  $T^{1,0}_gG.$  
For   multiple tangent bundles of $G,$   one   regards 
 them  as identical.   
    Equip $G$ with a left-invariant Hermitian metric (such a metric is aways existent, see \cite{Fa, V-S-C}),  i.e., 
 $$\langle\xi, \eta\rangle=\big\langle(dL_h)_g(\xi), (dL_h)_g(\eta)\big\rangle, \ \  \ \   \xi, \eta\in T^{1,0}_gG$$
for any $g, h\in G.$ With this metric,   $(dL_h)_g$ is a linear isometry and so   
 \begin{equation}\label{dL}
 \|(dL_h)_g\|=\|(dL_h)^{-1}_g\|=1, \ \ \ \      g, h\in G.
 \end{equation}
This  metric  on $T^{1,0}G$ induces a natural metric on $T^{1,0}(T^{1,0}G)$ defined by
$$\langle \zeta,  \eta\rangle:=\big\langle \iota_{\xi}^{-1}(\zeta), \iota_{\xi}^{-1}(\eta)\big\rangle, \ \ \ \    \zeta, \eta\in T^{1,0}_\xi (T^{1,0}_gG),$$
which forces    $\iota_\xi$ to be  a linear  isometry and it is therefore  
 \begin{equation}\label{xpp}
\|\iota_\xi\|=\|\iota_\xi^{-1}\|=1, \ \ \ \     \|d\iota_{\xi}\|=\|d\iota^{-1}_{\xi}\|=1.
  \end{equation}
By (\ref{dL}) and (\ref{xpp}),  
 we  may regard  in order  to simplify the formula  that 
  \begin{equation}\label{exp00}
  (d(dL_{h})_g)_\xi=(dL_{h})_g\circ \iota^{-1}_{\xi},
   \end{equation}
  up to a linear  isometry.
 
      Let 
  $$(d\exp_g)_{\xi}: \ T^{1,0}_\xi (T^{1,0}_gG)\to T^{1,0}_{\exp_{g}(\xi)}G$$ 
be  the differential of $\exp_g: T^{1,0}_gG\to G$ at $\xi,$ which  can be  represented     as 
$$(d\exp_g)_\xi(\eta)=\frac{d}{dz}\Big{|}_{z=0}\exp_g\big(\xi+z\iota_{\xi}^{-1}(\eta)\big), \ \ \ \     \eta\in T^{1,0}_\xi (T^{1,0}_gG).
$$
For $d\exp_X,$ the well-known   Duhamel formula (see, e.g., Theorem 5 in Section 1.2,  \cite{Ross}) gives      an explicit  expression
 \begin{equation}\label{exp10}
  d\exp_X=\big(dL_{\exp(X)}\big)_e\circ\frac{Id-e^{-{\rm{ad}}_X}}{{\rm{ad}}_X}\circ\iota^{-1}_{X}, \ \ \ \    X\in\mathfrak g,
     \end{equation}
     where 
 $$\frac{Id-e^{-{\rm{ad}}_X}}{{\rm{ad}}_X}=\sum_{k=0}^{+\infty}\frac{(-1)^k}{(k+1)!}({\rm{ad}}_X)^k$$
 with   ${\rm{ad}}_X(Y)=[X,Y]=XY-YX$ for  $Y\in\mathfrak g.$
Set $X=(dL_{g^{-1}})_g(\xi).$
Using Corollary \ref{cry},  (\ref{exp00}) and (\ref{exp10}),  we derive  that 
  \begin{eqnarray}\label{exp11}
   (d\exp_{g})_\xi&=&(dL_g)_{\exp(X)}\circ \big(dL_{\exp(X)}\big)_e\circ\frac{Id-e^{-{\rm{ad}}_X}}{{\rm{ad}}_X} \\
   &&\circ \iota^{-1}_X\circ (dL_{g^{-1}})_g\circ\iota^{-1}_{\xi}.  \nonumber
   \end{eqnarray}
 
        \begin{property}\label{unm} For any $g\in G,$
$(d\exp_{g})_{\emph{\textbf 0}}$ is a linear  isometry. 
  \end{property}
  \begin{proof}  
Using (\ref{exp11}) (or  combining  Corollary \ref{cry} and (\ref{exp00}) with  (\ref{exp10})),  we obtain  $$(d\exp_g)_{\textbf 0}=
 (dL_{g})_e\circ\iota^{-1}_{\textbf0}\circ (dL_{g^{-1}})_g\circ \iota^{-1}_{\textbf0},$$ which is a linear  isometry. 
  \end{proof}
  
Define 
$$C_{\mathfrak g}:=\sup_{X, Y\in\mathfrak g, \  \|X\|=\|Y\|=1}\|[X, Y]\|,$$ 
which is called  the \emph{structure constant} of $G.$ Note  that   $0\leq C_{\mathfrak g}<+\infty,$ since  $\mathfrak g$ has finite dimension.  By    
$\|{\rm{ad}}_X(Y)\|=\|[X, Y]\|\leq C_{\mathfrak g}\|X\|\|Y\|,$ we get  
\begin{equation}\label{eq12}
\|{\rm{ad}}_X\|\leq C_{\mathfrak g}\|X\|.
\end{equation}

 \begin{theorem}\label{p333} For any $\xi\in T^{1,0}_gG$ with $g\in G,$  we have 
$$\big\|(d\exp_{g})_{\xi}\big\| \leq  \frac{e^{C_{\mathfrak g}\|\xi\|}-1}{C_{\mathfrak g}\|\xi\|}.$$
  \end{theorem}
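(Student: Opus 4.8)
The plan is to read the operator norm straight off the explicit factorization (\ref{exp11}) and then estimate the single non-isometric factor through its power series. First I would set $X=(dL_{g^{-1}})_g(\xi)$ and invoke (\ref{exp11}), which expresses $(d\exp_g)_\xi$ as a composition of six maps. By (\ref{dL}) each of the left-translation differentials $(dL_g)_{\exp(X)}$, $(dL_{\exp(X)})_e$ and $(dL_{g^{-1}})_g$ is a linear isometry, and by (\ref{xpp}) the maps $\iota_X^{-1}$ and $\iota_\xi^{-1}$ are linear isometries as well. Since pre- or post-composing a bounded operator with a linear isometry leaves its operator norm unchanged, the entire norm collapses onto the central factor:
$$\big\|(d\exp_g)_\xi\big\|=\left\|\frac{Id-e^{-{\rm{ad}}_X}}{{\rm{ad}}_X}\right\|.$$
Moreover, because $(dL_{g^{-1}})_g$ is an isometry, $\|X\|=\|\xi\|$, a fact I will use at the end.

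Next I would bound this central factor by its defining series. Using the triangle inequality together with submultiplicativity of the operator norm,
$$\left\|\frac{Id-e^{-{\rm{ad}}_X}}{{\rm{ad}}_X}\right\|=\left\|\sum_{k=0}^{+\infty}\frac{(-1)^k}{(k+1)!}({\rm{ad}}_X)^k\right\|\leq\sum_{k=0}^{+\infty}\frac{1}{(k+1)!}\|{\rm{ad}}_X\|^k.$$
Then I would apply the structure-constant estimate (\ref{eq12}), namely $\|{\rm{ad}}_X\|\leq C_{\mathfrak g}\|X\|=C_{\mathfrak g}\|\xi\|$, to replace each $\|{\rm{ad}}_X\|$ by $C_{\mathfrak g}\|\xi\|$ in the series above.

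Finally, writing $t=C_{\mathfrak g}\|\xi\|$, I would recognize the resulting series in closed form,
$$\sum_{k=0}^{+\infty}\frac{t^k}{(k+1)!}=\frac{1}{t}\sum_{n=1}^{+\infty}\frac{t^n}{n!}=\frac{e^{t}-1}{t},$$
which gives the asserted inequality $\|(d\exp_g)_\xi\|\leq(e^{C_{\mathfrak g}\|\xi\|}-1)/(C_{\mathfrak g}\|\xi\|)$. The degenerate case $t=0$ (when $\xi=\textbf 0$, or when $G$ is abelian so that $C_{\mathfrak g}=0$) is interpreted as the limiting value $1$ of the series, which is consistent with Property \ref{unm}. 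The argument is essentially bookkeeping with no analytic difficulty; the only point requiring genuine care is confirming that every map flanking the central term in (\ref{exp11}) is norm-preserving, so that the operator norm really does reduce to that of $\frac{Id-e^{-{\rm{ad}}_X}}{{\rm{ad}}_X}$ — and this is exactly what the left-invariance identities (\ref{dL}) and the induced-metric identities (\ref{xpp}) guarantee.
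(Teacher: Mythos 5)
Your proposal is correct and follows essentially the same route as the paper: both read the bound off the factorization (\ref{exp11}), use (\ref{dL}) and (\ref{xpp}) to dispose of the isometric factors, and estimate $\bigl\|\frac{Id-e^{-{\rm{ad}}_X}}{{\rm{ad}}_X}\bigr\|$ term by term via (\ref{eq12}) to obtain $\sum_{k\ge 0}(C_{\mathfrak g}\|\xi\|)^k/(k+1)!=(e^{C_{\mathfrak g}\|\xi\|}-1)/(C_{\mathfrak g}\|\xi\|)$. The only cosmetic differences are that you assert equality with the central factor where the paper is content with $\leq$ from submultiplicativity, and you add a welcome remark on the degenerate case $C_{\mathfrak g}\|\xi\|=0$.
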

  
\begin{proof}
It yields from   (\ref{dL}), (\ref{xpp}), (\ref{exp11}) and (\ref{eq12}) that 
\begin{eqnarray*}
\big\|(d\exp_{g})_{\xi}\big\| &\leq&\big\|(dL_g)_{\exp(X)}\big\|\cdot  \big\|\big(dL_{\exp(X)}\big)_e\big\|
\cdot \bigg\|\frac{Id-e^{-{\rm{ad}}_X}}{{\rm{ad}}_X}\bigg\| \\
&& \cdot \big\|\iota^{-1}_X\big\|\cdot \big\|(dL_{g^{-1}})_g\big\|\cdot \big\|\iota^{-1}_{\xi}\big\| 
 \\
&\leq& \bigg\|\frac{Id-e^{-{\rm{ad}}_X}}{{\rm{ad}}_X}\bigg\| \\
&\leq& \sum_{k=0}^{+\infty}\frac{1}{(k+1)!}\big(C_{\mathfrak g}\|X\|\big)^k \\
&=& \frac{e^{C_{\mathfrak g}\|\xi\|}-1}{C_{\mathfrak g}\|\xi\|},
\end{eqnarray*}
where $X=(dL_{g^{-1}})_g(\xi).$
\end{proof}

\section{Proofs  and Picard Theorem}

 Let $\mathscr C(M, N)$ denote  the set of all continuous mappings between complex manifolds $M,N.$ 
Equip $\mathscr C(M,N)$ with  the compact-open topology. 
A family $\mathscr F\subset \mathscr C(M,N)$ is called  \emph{normal}, if and only if each sequence of $\mathscr F$ contains a subsequence which 
 is either relatively compact in $\mathscr C(M,N)$ 
 or compactly divergent (Definition 1.1,   \cite{Wu}).  
 Let  $\mathscr H(M,N)\subset \mathscr C(M, N)$  
 denote     the set of all holomorphic  mappings between   $M,N.$ Anyway, 
once a distance function is chosen on $N,$ 
a sequence $\{f_j\}_{j=1}^{+\infty}\subset \mathscr C(M, N)$ converges to  an $f\in \mathscr C(M, N)$
 if and only if $\{f_j\}_{j=1}^{+\infty}$ converges uniformly on compact subsets  to 
$f.$  Hence, a basic fact 
 derived from  Weierstrass theorem shows that 
 $\mathscr H(M,N)$ is closed in $\mathscr C(M, N).$   
 When  $N$ is  a compact Hermitian manifold, 
 using  Ascoli-Arzel\`a theorem  or 
    a result of Wu (Lemma 1.1, \cite{Wu}),  
    $\mathscr F$ is normal  on $M$ as long as  $\mathscr F$ is equicontinuous on $M.$

 To prove Theorem \ref{main}, we    extend    Marty's criterion  \cite{FM} for normality.  
 Let $f: M\to N$ be a holomorphic  mapping between   Hermitian manifolds $M, N.$
The \emph{operator norm} $\|df\|$ of  differential $df$ is defined by 
$$\|df_x\|=\sup_{\xi\in T^{1,0}_xM}\frac{\|df_x(\xi)\|_N}{\|\xi\|_M}, \ \ \ \  x\in M.$$
\ \ \ \    We provide  a slight extension of   Marty's criterion  as follows.

\begin{lemma}\label{normal} Let $M,N$ be Hermitian manifolds with that $N$ is compact. Let $\mathscr F:M\to N$ be a family of holomorphic mappings. Then, $\mathscr F$ is normal on $M$ if and only if  $\{\|df\|\}_{f\in\mathscr F}$ is locally uniformly bounded on $M.$
\end{lemma}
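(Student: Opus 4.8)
The plan is to prove the two directions of this Marty-type criterion by reducing normality on the Hermitian manifolds $M, N$ to equicontinuity, and then translating equicontinuity into a uniform bound on the family of operator norms $\{\|df\|\}_{f\in\mathscr F}$. The key input, already recorded in the excerpt, is Wu's result: since $N$ is compact, $\mathscr F$ is normal on $M$ whenever $\mathscr F$ is equicontinuous on $M$. So the heart of the argument is the equivalence between local uniform boundedness of $\{\|df\|\}$ and (local) equicontinuity of $\mathscr F$, and a partial converse producing the derivative bound from normality.

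First I would prove the sufficiency direction ($\{\|df\|\}$ locally uniformly bounded $\Rightarrow$ $\mathscr F$ normal). Fix a point $x_0\in M$ and a compact coordinate ball $K$ around it; by hypothesis there is a constant $C>0$ with $\|df_x\|\le C$ for all $f\in\mathscr F$ and all $x\in K$. The idea is that the operator norm bounds the infinitesimal stretching of the Riemannian (Hermitian) distance: for any two points $x,y$ in a geodesically convex neighborhood inside $K$, integrating along a minimizing geodesic $\sigma$ from $x$ to $y$ gives
\begin{equation*}
d_N\big(f(x),f(y)\big)\le \int_0^1 \big\|df_{\sigma(t)}(\sigma'(t))\big\|_N\,dt\le C\int_0^1 \|\sigma'(t)\|_M\,dt = C\, d_M(x,y).
\end{equation*}
This uniform Lipschitz estimate is exactly equicontinuity of $\mathscr F$ on a neighborhood of $x_0$; since $x_0$ was arbitrary and equicontinuity is a local property, $\mathscr F$ is equicontinuous on $M$, and Wu's lemma then yields normality.

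For the necessity direction ($\mathscr F$ normal $\Rightarrow$ $\{\|df\|\}$ locally uniformly bounded) I would argue by contradiction. Suppose $\{\|df\|\}_{f\in\mathscr F}$ is not locally uniformly bounded near some $x_0$; then there exist a sequence $f_j\in\mathscr F$ and points $x_j\to x_0$ with $\|df_{j,x_j}\|\to+\infty$. By normality, pass to a subsequence that is either relatively compact or compactly divergent in $\mathscr C(M,N)$. The compactly divergent case should be excluded because all the $f_j$ attain their (blowing-up) derivatives near the fixed point $x_0$ where $N$ compact keeps the images in a fixed compact set; more carefully, one uses that convergence is uniform on compacta so the limiting behavior near $x_0$ is controlled. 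In the relatively compact case the subsequence converges uniformly on compacta to a holomorphic limit $f$ (using that $\mathscr H(M,N)$ is closed, as noted in the excerpt), and then a Cauchy-estimate/Weierstrass argument in local holomorphic coordinates shows $df_{j}\to df$ uniformly near $x_0$, so $\|df_{j,x_j}\|$ stays bounded — contradicting $\|df_{j,x_j}\|\to\infty$.

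The main obstacle I expect is the necessity direction, specifically ruling out or correctly handling the compactly divergent alternative in Wu's definition of normality, together with promoting uniform-on-compacta convergence of the maps to uniform convergence of their differentials. The latter is a standard consequence of Cauchy's integral formula in local coordinates, but it must be set up carefully on a manifold target: one covers a neighborhood of the limit value in $N$ by a holomorphic chart and works there, which is legitimate since the convergence is uniform on a compact neighborhood of $x_0$ and the limit value lies in $N$. Handling the compact divergence cleanly — showing it cannot coexist with the family taking values in the compact manifold $N$ while the base points converge — is the subtle point, and I would phrase it using the fact that compact divergence forces the images to eventually leave every compact subset of $N$, which is impossible when $N$ itself is compact.
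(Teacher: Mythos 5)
Your proposal follows essentially the same route as the paper: sufficiency by integrating the derivative bound along curves to get a uniform Lipschitz estimate, hence equicontinuity, and then Wu's lemma (using compactness of $N$); necessity by contradiction, extracting a convergent subsequence and using Weierstrass/Cauchy estimates in local coordinates to see that the differentials converge, contradicting the blow-up of $\|df_{j}\|$ at $x_j\to x_0$. If anything, you are slightly more careful than the paper in explicitly ruling out the compactly divergent alternative (which is indeed vacuous since $N$ is compact), a point the paper passes over silently.
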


\begin{proof} 
We first prove  the sufficiency. Let    $x_0\in M$ be an arbitrary point, and 
 take a small neighborhood $U(x_0)$ of $x_0$ in $M.$
  For  a  fixed   $x\in U(x_0),$  we treat  
     a $\mathscr C^1$-continuous curve $\gamma:[0,1]\to M$ which connects  $x_0$ with $x.$
     It is evident  that 
     $f\circ\gamma:[0,1]\to N$ is a $\mathscr C^1$-continuous curve connecting $f(x_0)$ with $f(x).$
Note that 
$d_M(x_0,x)=\inf_{\gamma}\|\gamma\|_M,$
where $\|\gamma\|_M$ is  the length of $\gamma.$ Then   
\begin{eqnarray*}
d_N(f(x_0), f(x))&\leq& \|f\circ\gamma\|_N \\
&=& \int_0^1\|(f\circ\gamma)'\|_N(t)dt \\
&\leq&  \int_0^1\|df_{\gamma(t)}\|\cdot\|\gamma'\|_M(t)dt, 
\end{eqnarray*}
where  $\|f\circ\gamma\|_N$ is the length of $f\circ\gamma.$ Utilizing  the conditions,  $\{\|df\|\}_{f\in\mathscr F}$ is uniformly bounded on $U(x_0).$ So, there exists  a constant $C>0,$ independent of $x\in U(x_0)$ and  $f\in\mathscr F,$ such that
$$\sup_{f\in\mathscr F}\sup_{x\in U(x_0)}\|df_x\|\leq C.$$
Therefore, we obtain 
$$d_N(f(x_0), f(x))\leq  C\int_0^1\|\gamma'\|_M(t)dt=C\|\gamma\|_M.$$
By the arbitrariness of $\gamma,$ it is immediate that 
 $$d_N(f(x_0), f(x))\leq  C d_M(x_0,x), \ \ \ \     f\in\mathscr F,$$
 which deduces that $\mathscr F$ is equicontinuous at $x_0,$ and hence  equicontinuous on $M.$
 Since $N$ is compact, $\mathscr F$  is normal on $M.$
 
 We then prove the necessity. On the contrary, we assume that $\{\|df\|\}_{f\in\mathscr F}$ is not locally uniformly bounded on $M.$
 Then, there are   geodesic balls $B, B'$ with $\overline B\subset B'\subset M,$ 
  and     sequences $\{x_j\}_{j=1}^{+\infty}\subset \overline{B},$  
        $\{f_j\}_{j=1}^{+\infty}\subset\mathscr F$ such that   
 \begin{equation}\label{ef}
 \lim_{j\to+\infty}\|(df_j)_{x_j}\|=+\infty.
 \end{equation}
 Using the compactness of  $\overline{B},$   there exists  a  subsequence $\{x_{j_k}\}_{k=1}^{+\infty}\subset \{x_j\}_{j=1}^{+\infty}$ 
  which  converges   to  a limit point $x_0\in \overline{B}.$ 
        Take  a     neighborhood  $U$ of $x_0$ such that  $\overline{U}\subset B'.$ 
               When $j$ is sufficiently large,  we have  $x_j\in U.$  
  Moreover,   since  $\mathscr F$ is normal on $M,$  
                            there is    a subsequence $\{f_{j_k}\}_{k=1}^{+\infty}\subset \{f_j\}_{j=1}^{+\infty}$ which 
                                                            converges    uniformly  to a   limit    mapping $f_0: \overline{U}\to N.$ 
                                                              Invoking   
     Weierstrass theorem, $f_0$ is holomorphic on $U$ and we receive  
                $$\lim_{k\to+\infty}\|(df_{j_k})_x\|=\|(df_0)_x\|\not=+\infty, \ \ \ \   x\in U$$
                 due to the continuity  of $\|(df_0)_x\|, \|(df_{j_k})_x\|$ in $x.$
                                  Since $x_{j_k}\to x_0$ as $k\to+\infty,$ we conclude that   
                $$\lim_{k\to+\infty}\|(df_{j_k})_{x_{j_k}}\|=\lim_{k\to+\infty}\|(df_{0})_{x_{j_k}}\|=\|(df_0)_{x_0}\|\not=+\infty.$$
     However, it contradicts with (\ref{ef}). This completes the proof. 
\end{proof}

\emph{Proof of Theorem $\ref{main}$}

\begin{proof} Equip $G$ with a left-variant Hermitian metric such that 
 $\{\varepsilon_1,\cdots, \varepsilon_m\}$ forms  an orthonormal  basis of $\mathfrak g.$
Assume that  $\mathscr F$ is not normal on $\Omega.$  Using    Lemma \ref{normal},    for any sequence $\{s_j\}_{j=1}^{+\infty}\subset \mathbb R$
 with $s_j\geq1$ for all $j,$ there exist    a compact subset    $K\subset\Omega,$
 a sequence $\{\tilde p_j\}_{j=1}^{+\infty}\subset K,$  a sequence $\{\tilde\xi_j\}_{j=1}^{+\infty}$ with $\tilde\xi_j\in T^{1,0}_{\tilde p_j}G$ and $\|\tilde\xi_j\|_G=1,$    
as well as   a sequence $\{f_j\}_{j=1}^{+\infty}\subset \mathscr F,$ such  that   
    \begin{equation}\label{geq}
  \big\|(df_j)_{\tilde p_j}(\tilde\xi_j)\big\|_N\geq j^{s_j}, \ \ \ \  j=1,2,\cdots
    \end{equation}
 By the compactness of  $K,$     there exists a convergent  subsequence of  $\{\tilde p_j\}_{j=1}^{+\infty}.$   
  Without loss of generality, it is convenient to  suppose   that  
 $\tilde p_j\to p_0\in K$ as $j\to+\infty$ and $d_G(p_0, \tilde p_j)< 1/(2j)$ for all $j,$  in which $d_G(\cdot,\cdot)$ is   the Riemannian distance  on $G.$  Let   $B_{G}(p_0, r)$ be the geodesic ball centered at $p_0$ with radius $r$ in $G.$ 
   For  $j=1,2,\cdots,$ put 
   $$M_j=\max_{g\in \overline{B_{G}(p_0, 1/j)}}\max_{\xi\in T^{1,0}_{g}G, \|\xi\|_G=1}\big\|(df_j)_g(\xi)\big\|_N.$$
Taking   $p_j\in \overline{B_{G}(p_0, 1/j)}$ and $\xi_j\in T^{1,0}_{p_j}G$ with $\|\xi_j\|_G=1$  such that   the above    expression is maximized, i.e., 
\begin{equation}\label{eqM}
    M_j=\big\|(df_j)_{p_j}(\xi_j)\big\|_N, \ \ \ \     j=1,2,\cdots
\end{equation}
It is apparent      that   $p_j\to p_0$ as $j\to+\infty,$ due to   $d_G(p_0, p_j)\leq1/j$  inherent  in  the definition of  $M_j$.  Thus, we assume without loss of generality that $p_j\in\Omega$ for all $j.$
 Define  
$$\rho_j=\frac{1}{M_j}, \ \ \ \   j=1,2,\cdots$$
It yields from  (\ref{geq}) and (\ref{eqM}) that  
$$M_j \geq \big\|(df_j)_{\tilde p_j}(\tilde\xi_j)\big\|_N
\geq j^{s_j},  \ \ \ \   j=1,2,\cdots$$
Since $s_j\geq1$ for all $j,$  we arrive at  
\begin{equation}\label{alal}   
  0<\rho_j\leq j^{-s_j} \to0 \ \ \ \  \ \  (j\to+\infty).
  \end{equation}
 \ \ \ \     Let us treat    the differential of $\exp_{p_j}$ at $\textbf 0$:
    $$(d\exp_{p_j})_{\textbf0}: \  T^{1,0}_{\textbf 0}(T^{1,0}_{p_j}G)\to T^{1,0}_{p_j}G, \ \ \ \    j=1,2,\cdots$$
     Define 
 $$\phi_j(z)=f_j\circ\exp_{p_j}\big(\alpha_j(z)\big), \ \ \ \    j=1,2,\cdots$$ 
 where    
   \begin{eqnarray*}
  \alpha_j(z) &=& \rho_j(dL_{p_j})_e(\varepsilon z^T).
    \end{eqnarray*}
 Since 
 $f_j, \exp_{p_j}, (dL_{p_j})_e$ are  holomorphic, 
it is obvious   that  $\phi_j$ is    holomorphic. 
 Moreover,  since  $(dL_{p_j})_e$ is a  linear isometry by   (\ref{dL}),   we infer     that  $\alpha_j:\mathbb C^m\to \mathfrak g$ is a  linear isomorphism  with  
$$\|\alpha_j\|=\rho_j, \ \ \ \    j=1,2,\cdots$$
  Let      
 $B_{\mathfrak g}(r)$ be    the geodesic ball centered at $\textbf 0$ with radius $r$ in $\mathfrak g.$ 
 Since  $\exp_{p_j}$ is  locally  biholomorphic  at $\textbf 0$ by Property \ref{unm}, and $(dL_{p_j})_e$ is  linearly  isometric, 
  there exists a number $r_j>0$ such that 
  $$\exp_{p_j}\big((dL_{p_j})_e\big(\overline{B_{\mathfrak g}(r_j)}\big)\big)\subset  B_{G}\big(p_0, 1/j\big).$$ 
  It is obvious  that     $r_j\to 0$ as $j\to+\infty.$ 
     Without loss of generality, we  assume that  
 $0<r_j<1$ for all $j.$ 
   Applying  the linear isometry of   $(dL_{p_j})_e,$  we have  
   \begin{equation}\label{wss}   
   \exp_{p_j}\big(\alpha(z)\big)\in  B_{G}\big(p_0, 1/j\big), \ \ \ \  \|z\|<\frac{r_j}{\rho_j}.
   \end{equation}
 Take 
 $$s_1=1;  \ \ \   \  \   s_j=1-\frac{\log r_j}{\log j}>1,  \ \ \ \  j=2, 3, \cdots$$
By the identity 
$x^{1/\log x}=e$ for  $x>0,$
 we conclude   from  (\ref{alal})  that $r_j\rho^{-1}_j\geq j$ for all $j\geq 2.$ By virtue of  (\ref{wss}),  we deduce that 
     $\phi_j$  is   well-defined on the ball $\{z\in\mathbb C^m: \|z\|<j\}$ with $j\geq2.$ Note  that this  ball  is    larger and larger    as $j$ increases.  
   Set 
   $$x_j(z)=\exp_{p_j}(\alpha_j(z)), \ \ \ \    j=1,2,\cdots$$
 As we have  noted that $d_G(p_0,  p_j)< 1/j$ for all $j.$ 
Fix a  number  $r_0>0.$ When  $\|z\|\leq r_0$ and $j$ is  large  such that $j>r_0,$ we infer   from Theorem \ref{p333}, (\ref{dL})-(\ref{exp00}) and (\ref{eqM})  that    
  \begin{eqnarray*}
 \|(d\phi_{j})_z\|  
 &=&\big\|d\big(f_j\circ\exp_{p_j}\circ\alpha_j\big)_z\big\|  \\
&\leq&  \big\|(df_j)_{x_j(z)}\big\|\cdot\big\|(d\exp_{p_j})_{\alpha_j(z)}\big\|\cdot\|(d\alpha_j)_z\|      \\
&\leq & \rho_jM_j \big\|(d\exp_{p_j})_{\alpha_j(z)}\big\|\cdot \|z\| \\
&\leq& r_0\frac{e^{C_{\mathfrak g}\|\alpha_j(z)\|_G}-1}{C_{\mathfrak g}\|\alpha_j(z)\|_G} \\
&=&r_0\frac{e^{C_{\mathfrak g}\rho_j\|z\|}-1}{C_{\mathfrak g}\rho_j\|z\|} \\
&\leq& r_0\frac{e^{C_{\mathfrak g}\rho_j r_0}-1}{C_{\mathfrak g}\rho_j r_0}  \\
&\to& r_0 \ \ \ \  \ \  (j\to+\infty).
 \end{eqnarray*}
 To conclude, $\phi_j$ is well-defined on  larger and larger balls in $\mathbb C^m$ as $j$ increases,  with    bounded differential on compact subsets. 
 Using     Lemma \ref{normal},  we derive  that $\{\phi_j\}_{j=1}^{+\infty}$  is normal, i.e.,  there  is  a subsequence $\{\phi_{j_k}\}_{k=1}^{+\infty}\subset\{\phi_j\}_{j=1}^{+\infty}$ such that 
 which      converges uniformly on  compact subsets of $\mathbb C^m$ 
      to a  holomorphic mapping  $\phi: \mathbb C^m\to N.$
Finally, we remain to examine   that $\phi$ is  not constant.
Since  $(d\exp_{p_j})_{\textbf 0}$ is a linear isometry   due  to  Property \ref{unm}, we see that 
$$(d\exp_{p_j})_{\textbf 0}\circ (d\alpha_{j})_{\textbf 0}: \ T^{1,0}_{\textbf 0}\mathbb C^m\to T^{1,0}_{p_j}G, \ \ \ \    j=1,2,\cdots$$ are   linear isomorphisms. Therefore,   there exists   a unique vector $t_j\in T^{1,0}_{\textbf 0}\mathbb C^m$ such that 
$(d\exp_{p_j})_{\textbf 0}\circ (d\alpha_{j})_{\textbf 0}(t_j)=\rho_j\xi_j$ for all $j.$
 By this with (\ref{eqM}), we obtain for  $j=1,2,\cdots$
 \begin{eqnarray*}
\big\|(d\phi_{j})_{\textbf 0}(t_j)\big\|_N  &=& \big\|d\big(f_j\circ\exp_{p_j}(\alpha_j)\big)_{\textbf 0}(t_j)\big\|_N  \\
&= & \big\|(df_{j})_{x_j(\textbf{0})}\circ (d\exp_{p_j})_{\alpha_j(\textbf 0)}\circ (d\alpha_j)_{\textbf 0}(t_j)\big\|_N \\
&= & \big\|(df_{j})_{p_j}\circ (d\exp_{p_j})_{\textbf 0}\circ (d\alpha_j)_{\textbf 0}(t_j)\big\|_N \\
&= & \rho_j\big\|(df_{j})_{p_j}(\xi_j)\big\|_N \\
&=&  \rho_jM_j \\
&=&  1,
 \end{eqnarray*}
which  implies  that  $\|(d\phi)_{\textbf 0}\|\not=0.$
Hence,   $\phi$ is  not a constant mapping. 

Assume that  the conditions $(a), (b)$ and $(c)$ are satisfied, we show that $\mathscr F$ is not normal at $p_0.$
Conversely,  assume that  $\mathscr F$ is  normal at $p_0.$  
By  Lemma \ref{normal},  
there exist  $\delta, M>0$ such that $\overline{B_G(p_0, \delta)}\subset\Omega$ and 
$$\sup_{f\in\mathscr F}\|df_g\|\leq M, \ \ \ \          g\in\overline{B_G(p_0, \delta)}.$$
Fix  $z\in\mathbb C^m.$ Since $p_j\to p_0,$  $0<\rho_j\to 0$ as $j\to+\infty,$ when $j$ is   large enough,  we have 
$\exp_{p_j}(\alpha_j(z))\in \overline{B_G(p_0, \delta)}.$
So, it yields  from Theorem \ref{p333} and (\ref{dL})-(\ref{exp00}) that for $j$ large  
 \begin{eqnarray*}
\|(d\phi_j)_z\|&\leq&\|df_j\|_{x_j(z)} \cdot\big\|(d\exp_{p_j})_{\alpha_j(z)}\big\|\cdot\|(d\alpha_j)_z\| \\
&\leq& M\rho_j\|z\| \frac{e^{C_{\mathfrak g}\|\alpha_j(z)\|_G}-1}{C_{\mathfrak g}\|\alpha_j(z)\|_G} \\
&\leq& M\rho_j\|z\|\frac{e^{C_{\mathfrak g}\rho_j\|z\|}-1}{C_{\mathfrak g}\rho_j\|z\|} \\\
&\to& 0 \ \ \ \  \  \  (j\to+\infty).
 \end{eqnarray*}
That is,   
$$\|d\phi_z\|=\lim_{j\to+\infty}\|(d\phi_j)_z\|=0.$$
But, the arbitrariness of $z$ implies  that $\phi$ is constant, a contradiction. 
\end{proof}

By the arguments in the proof of Theorem \ref{main}, it is immediate to deduce that Corollary \ref{mmpp} is true.

\emph{Proof of Theorem $\ref{ZZZ}$}

\begin{proof}  Equip $G$ with a left-variant Hermitian metric.  
 Conversely, we assume that  $\mathscr F$ is not normal at some point  $p_0\in\Omega.$
  By     Corollary   \ref{mmpp},   there exist     sequences $\{f_j\}_{j=1}^{+\infty}\subset \mathscr F,$  
     $\{p_j\}_{j=1}^{+\infty}\subset \Omega$ with $p_j\to p_0$ as $j\to+\infty,$    
      $\{\rho_j\}_{j=1}^{+\infty}\subset\mathbb R$ with  $0<\rho_j\to0$ as $j\to+\infty,$  
          such that    $\phi_j(z)=f_j\circ\exp_{p_j}(\theta^j z^T)$ 
   converges uniformly on  compact subsets of $\mathbb C^m$ to a  nonconstant holomorphic mapping  $\phi: \mathbb C^m\to N,$ 
  where   $\theta^j=\rho_j(dL_{p_j})_e(\varepsilon)$ gives a basis of $T^{1,0}_{p_j}G$ for  each $j.$ 
Take a  number $\delta>0$ such that $\overline{\Delta}\subset\Omega,$ 
where $\Delta=B_G(p_0, \delta)$ is the geodesic ball centered at $p_0$ with radius $\delta$ in $G.$ 
By $(a),$ we have $(f_j, \Delta)\in P.$ Further, we get   from $(b)$ that  $(\phi_j, D_j)\in P,$ 
where $D_j=x^{-1}_j(\Delta)$ 
and  $x_j(z)=\exp_{p_j}(\theta^j z^T).$ 
Since  $0<\rho_j\to0$ as $j\to+\infty,$     $D_j$ is larger and larger   
as $j$ increases, which leads to that   
$\{\phi_j; (\phi_j, D_j)\in P\}_{j=1}^{+\infty}$ 
converges uniformly on  compact subsets of $\mathbb C^m$ to  $\phi: \mathbb C^m\to N.$ 
By    $(c),$ 
we deduce  that   $(\phi, \mathbb C^m)\in P$
 or $(\phi, \mathbb C^m)\in\neg P.$ 
  However, 
 $\phi$ reduces to a constant mapping due to $(d),$
  a   contradiction.   
 \end{proof}

Finally,  we  give  a  Picard  theorem of meromorphic   functions on connected complex Lie groups.

\begin{theorem}\label{main3} Let $G$ be a connected complex Lie group, and  $N$  a    Brody  hyperbolic   complex  manifold. Then,  there exist  no nonconstant   holomorphic mappings from  $G$ into  $N.$  
\end{theorem}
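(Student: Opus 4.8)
The plan is to reduce the global constancy of an arbitrary holomorphic map $f\colon G\to N$ to the constancy of its restrictions to the entire holomorphic one-parameter families passing through each point of $G$, exploiting Brody hyperbolicity, and then to upgrade pointwise local constancy to global constancy by invoking connectedness of $G$.

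First I would fix any holomorphic $f\colon G\to N$ and any point $p\in G$, and consider the curves emanating from $p$. For each $\xi\in T^{1,0}_pG$, Properties \ref{pp4} and \ref{p11} tell us that $z\mapsto\gamma_\xi(z)=\exp_p(z\xi)$ is a holomorphic one-parameter subgroup, and in particular an \emph{entire} curve $\mathbb C\to G$ (defined on all of $\mathbb C$, as developed in Section \ref{sec2}). Hence $f\circ\gamma_\xi\colon\mathbb C\to N$ is a holomorphic map from the whole plane. Since $N$ is Brody hyperbolic, every such map must be constant, so $f\big(\exp_p(z\xi)\big)=f\big(\exp_p(\mathbf 0)\big)=f(p)$ for every $z\in\mathbb C$ and every $\xi\in T^{1,0}_pG$.

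Next I would promote this to constancy on a full neighborhood of $p$. By Property \ref{unm}, $(d\exp_p)_{\mathbf 0}$ is a linear isometry, hence invertible; therefore $\exp_p$ is a local biholomorphism at $\mathbf 0$, and its image contains an open neighborhood $U_p$ of $p=\exp_p(\mathbf 0)$. Every $q\in U_p$ can be written as $q=\exp_p(v)$ for some small $v\in T^{1,0}_pG$, and taking $\xi=v$, $z=1$ in the previous paragraph yields $f(q)=f(p)$. Thus $f\equiv f(p)$ on $U_p$; that is, $f$ is locally constant at \emph{every} point of $G$. Finally, a locally constant map on a connected space is constant, so $f$ is constant on the connected group $G$, which proves the theorem.

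The one point that needs care is that Brody hyperbolicity applies only to curves defined on all of $\mathbb C$; this is exactly what is guaranteed, since the one-parameter subgroups $\gamma_\xi$ are entire. The only genuinely structural input beyond this is that $\exp_p$ has invertible differential at $\mathbf 0$, which is precisely Property \ref{unm}. This is what ensures the image of $\exp_p$ sweeps out a whole neighborhood of $p$ rather than merely a union of curves through $p$, and hence what turns \textquotedblleft constant along each one-parameter subgroup\textquotedblright\ into \textquotedblleft locally constant,\textquotedblright\ from which connectedness delivers global constancy. I expect this passage from curve-wise to neighborhood-wise constancy to be the only conceptual step, the rest being direct applications of the properties already established.
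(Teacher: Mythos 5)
Your proof is correct and follows essentially the same route as the paper: compose $f$ with the entire one-parameter subgroups furnished by the exponential map, apply Brody hyperbolicity to force each such curve to be constant, and use that the exponential is a local biholomorphism to convert curve-wise constancy into constancy on a neighborhood. The only difference is the globalization step --- the paper argues only at the identity $e$ and then invokes the uniqueness theorem for analytic mappings, whereas you establish local constancy at every point and finish with the purely topological fact that a locally constant map on a connected space is constant, a marginally more elementary conclusion.
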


\begin{proof} Since  $\exp:\mathfrak g\to G$ is locally  biholomorphic at  $\textbf{0}\in\mathfrak g,$ 
 we can choose    a neighborhood   $U$ of $\textbf{0}$ in $\mathfrak g$ such that the restriction 
$\exp|_{U}: U\to \exp(U)$ is a biholomorphic mapping. 
  It suffices to  verify   that any holomorphic mapping $f: G\to N$ is 
   constant  on $\exp(U),$    due to the uniqueness theorem of analytic mappings.
       For an arbitrary    $g\in\exp(U),$ there is  a unique $X_g\in \mathfrak g$ such that $\exp(X_g)=g.$
         Treat   the following holomorphic curve
  $$\phi_g(z):=f\circ\exp(zX_g): \  \mathbb C\to N, \ \ \ \   z\in\mathbb C.$$
  Since $N$ is Brody  hyperbolic, we have  $\phi_g\equiv y_0$ that is  a  fixed  point in $N.$  
  So, $f(g)=\phi_g(1)=y_0.$ 
     On the other hand,  with the help of    Properties  \ref{p1} and \ref{p2},  we note   that 
                     $\{\exp(zX_g)\}_{z\in\mathbb C}$ is a  holomorphic one-parameter   subgroup, 
             which  yields  immediately    that  $\phi_g(0)=f(e)$ independent of $g.$ 
          Therefore, we receive   
                             $y_0\equiv f(e)$ for any $g\in \exp(U)$ due to $\phi_g(1)=\phi_g(0).$
            So, $f$ is  constant  on $\exp(U).$ This completes the proof. 
\end{proof}

\begin{cor}\label{} Let $G$ be a connected complex Lie group.  Then, any  meromorphic function on  $G$ reduces to  a constant if it avoids  three distinct values. 
\end{cor}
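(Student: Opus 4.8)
The plan is to realize a meromorphic function as a holomorphic mapping into the Riemann sphere and then invoke Theorem \ref{main3}. First I would identify a meromorphic function $h$ on $G$ with a holomorphic mapping $f: G\to \mathbb P^1(\mathbb C)$ in the usual fashion, sending the poles of $h$ to the point $\infty\in\mathbb P^1(\mathbb C)$; this identification is standard and uses only that $\mathbb P^1(\mathbb C)$ is the one-point compactification of $\mathbb C$ as a complex manifold, so that $f$ is genuinely holomorphic (not merely meromorphic) as a map of complex manifolds.

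Next, the hypothesis that $h$ omits three distinct values amounts to saying that $f$ avoids three distinct points $a, b, c\in\mathbb P^1(\mathbb C)$, so that $f$ factors through the thrice-punctured sphere $N:=\mathbb P^1(\mathbb C)\setminus\{a,b,c\}$, regarded as an open complex submanifold of $\mathbb P^1(\mathbb C)$. The crucial observation is that $N$ is Brody hyperbolic: after composing with a M\"obius transformation carrying $a,b,c$ to $0,1,\infty$, any holomorphic curve $\mathbb C\to N$ becomes an entire function omitting the two values $0$ and $1$, hence constant by the little Picard theorem. Thus every holomorphic mapping $\mathbb C\to N$ is constant, which is precisely Brody hyperbolicity of $N$.

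With this in hand, I would apply Theorem \ref{main3} to the connected complex Lie group $G$ and the Brody hyperbolic manifold $N$: it guarantees that there are no nonconstant holomorphic mappings $G\to N$. Consequently $f$, and therefore $h$, is constant. I expect no serious obstacle here, since the entire weight of the argument rests on Theorem \ref{main3} together with the classical little Picard theorem. The only point requiring mild care is that Theorem \ref{main3} is stated for an arbitrary Brody hyperbolic complex manifold $N$ and does \emph{not} demand compactness of $N$; this is essential, as the thrice-punctured sphere is noncompact, and one should confirm that the proof of Theorem \ref{main3} indeed uses only Brody hyperbolicity (through the constancy of the one-parameter curves $\phi_g$) and the connectedness of $G$ (through the uniqueness theorem), neither of which invokes compactness of the target.
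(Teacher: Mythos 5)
Your argument is correct and is exactly the derivation the paper intends: the corollary is stated immediately after Theorem \ref{main3} with no written proof, precisely because realizing the function as a holomorphic map into the thrice-punctured sphere (Brody hyperbolic by the little Picard theorem) and citing Theorem \ref{main3} is the whole content, and you rightly check that Theorem \ref{main3} nowhere requires $N$ to be compact. The only point you gloss over---shared equally by the paper---is that for $\dim G>1$ a meromorphic function may have indeterminacy points at which it does not define a holomorphic map to $\mathbb P^1(\mathbb C)$; after a M\"obius change sending one omitted value to $\infty$ the function becomes a genuine holomorphic function off a codimension-two analytic set, and one must either dispose of that set or arrange the one-parameter curves in the proof of Theorem \ref{main3} to avoid it, which is routine but not automatic.
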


\begin{cor}\label{}   There exist no Brody hyperbolic complex  Lie groups. 
\end{cor}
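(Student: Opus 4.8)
The plan is to turn the hypothetical hyperbolic group against itself via Theorem \ref{main3}. I would argue by contradiction: suppose $G$ is a Brody hyperbolic complex Lie group. A $0$-dimensional group is discrete, on which every holomorphic curve is automatically constant, so such groups are trivially Brody hyperbolic; the substantive assertion is therefore for $\dim G \geq 1$, which I assume from now on.

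Since Theorem \ref{main3} is stated for a connected domain, my first step would be to reduce to the connected case. I would take $G_0$, the identity component of $G$, which is an open and closed connected complex Lie subgroup with $\dim G_0 = \dim G \geq 1$. Because the inclusion $\iota: G_0 \hookrightarrow G$ is holomorphic, every holomorphic map $g: \mathbb{C} \to G_0$ composes to a holomorphic map $\iota \circ g: \mathbb{C} \to G$, which is constant by hypothesis; hence $g$ is constant and $G_0$ is itself Brody hyperbolic.

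I would then invoke Theorem \ref{main3} with the connected complex Lie group $G_0$ as the domain and with the Brody hyperbolic manifold $N = G_0$ as the target. The theorem yields that there is no nonconstant holomorphic mapping $G_0 \to G_0$. But the identity mapping $\mathrm{id}: G_0 \to G_0$ is holomorphic and, since $\dim G_0 \geq 1$, nonconstant, which is the desired contradiction; therefore no Brody hyperbolic complex Lie group of positive dimension exists.

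There is no real obstacle in this argument — all the weight is carried by Theorem \ref{main3}. The only points requiring care are bookkeeping: isolating the degenerate $0$-dimensional case, where the statement as phrased would otherwise fail, and passing to the identity component so that the connectedness hypothesis of Theorem \ref{main3} is met while preserving both Brody hyperbolicity and positive dimension.
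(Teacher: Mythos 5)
Your proof is correct and follows essentially the same route as the paper: both derive the contradiction by applying Theorem \ref{main3} to the identity mapping of the hypothetical hyperbolic group. Your version is slightly more careful than the paper's --- you pass explicitly to the identity component to satisfy the connectedness hypothesis of Theorem \ref{main3}, and you isolate the zero-dimensional case, where a discrete group is vacuously Brody hyperbolic and the identity map is constant --- but these are refinements of the same argument, not a different one.
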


\begin{proof}  Conversely, assume that there exists  a  Brody hyperbolic complex  Lie group $G.$
Let us    consider the identity mapping $Id: G\to G,$ which 
 is clearly  holomorphic and nonconstant on each connected component of $G.$   However,    Theorem \ref{main3} asserts   that  $Id$ must be  constant on each   connected component of $G,$  which is a contradiction. 
\end{proof}


\vskip\baselineskip

\end{document}